\documentclass[reqno,11pt]{amsart}

\usepackage{times,amsmath,cancel,stmaryrd,graphicx}
\usepackage{amsfonts,enumitem,amssymb,xcolor,mathrsfs}
\usepackage[colorlinks=true]{hyperref}
\usepackage{cleveref,enumerate}
 \usepackage[T1]{fontenc}
\usepackage{lmodern}
\usepackage{cite}
\usepackage{mathrsfs}

\numberwithin{equation}{section}

\newcommand{\R}{\mathbb{R}}
\newcommand{\C}{\mathbb{C}}

\newcommand{\N}{\mathbb{N}}
\newcommand{\G}{\Gamma}

\newcommand{\K}{\mathbb{K}}

\newcommand{\bT}{\mathbb{T}}

\newcommand{\kL}{\mathcal{L}}

\newcommand{\ve}{\varepsilon}
\newcommand{\rd}{\mathrm{d}}

\newcommand{\dhr}{\mathrel{\lhook\joinrel\relbar\kern-.8ex\joinrel\lhook\joinrel\rightarrow}}

\colorlet{halfblue}{blue!40!black!30!red}

\newtheorem{thm}{Theorem}[section]
\newtheorem{prop}[thm]{Proposition}
\newtheorem{lemma}[thm]{Lemma}

\newtheorem{rem}[thm]{Remark}

\theoremstyle{remark}

\begin{document}

\title[Linearized Stability of Non-Isolated Equilibria]{Linearized Stability of Non-Isolated Equilibria of Quasilinear Parabolic Problems in Interpolation Spaces}

\author{Bogdan--Vasile Matioc}
\address{Fakult\"at f\"ur Mathematik, Universit\"at Regensburg,   93053 Regensburg, Deutschland.}
\email{bogdan.matioc@ur.de}

\author{Christoph Walker}
\address{Leibniz Universit\"at Hannover\\
Institut f\"ur Angewandte Mathematik\\
Welfengarten 1\\
30167 Hannover\\
Germany}
\email{walker@ifam.uni-hannover.de}

\thanks{Partially supported by the RTG 2339
 ``Interfaces, Complex Structures, and Singular Limits'' of the German Science Foundation (DFG)}

\begin{abstract}
The stability of non-isolated equilibria to
 quasilinear parabolic problems of the form
$u' = A(u)u + f(u)$ is established in interpolation spaces (and thus extending previous results relying on maximal regularity). The approach allows full flexibility in choosing the interpolation methods  and requires only low regularity assumptions on the semilinear part $f$. Applications to concrete problems are presented, including the capillarity-driven Hele--Shaw problem and the fractional mean curvature flow.
\end{abstract}

\subjclass[2020]{35K59; 35B35; 37D10; 35R35}
\keywords{Linearized stability; Quasilinear parabolic problems; Center manifolds}

\maketitle

\pagestyle{myheadings}

\markboth{\sc{B.-V.~Matioc \& Ch. Walker }}{Linearized Stability of Non-Isolated Equilibria}

\section{Introduction and Main Results}

In this paper we study stability properties of equilibrium solutions to quasilinear parabolic problems of the form
	\begin{equation}\label{CP}
		u'=A(u)u+f(u),\quad t>0,\qquad u(0)=u^0,
	\end{equation} 
assuming that $A(u)$ is an unbounded operator on a Banach space $E_0$ with domain $E_1$ which generates an analytic semigroup  on $E_0$.

 A well-established and powerful tool for proving 
exponential stability or instability of (isolated) stationary solutions is the principle of linearized stability, where
these properties are inferred from the sign of the real parts of the spectral values of the corresponding linearized operator  at the equilibrium.
Owing to its significance in applications, such abstract results have been developed in the parabolic setting within a variety of frameworks and by means of different techniques, see~\cite{DaPL88,D89,G88,Lu85,L95,PF81,P02,PSW18,MW20, MSW25} for an incomplete list of related works.

In certain applications, however, the set of equilibrium solutions is not discrete but instead forms a finite-dimensional smooth manifold. In some cases, conserved quantities (flow invariants) single out a unique equilibrium, which can then be identified as the omega-limit set of a given initial value by applying the principle of linearized stability to an appropriate restriction of the problem.

A standard—though technically involved—approach to analyzing the stability of stationary solutions in situations where flow invariants cannot be fully exploited is based on center manifold theory.
This theory is well established in the analysis of nonlinear parabolic evolution problems, see, e.g.,~\cite{Carr82,L95, HD81, DaPL88, BHL00, LPS08, LPS06, MA91, GS95, C09}.

More recently,  working within continuous, H\"older, or $L_p$-maximal regularity frameworks, the principle of linearized stability has been substantially  extended in~\cite{PS16, PSZ9a, PSZ09c}.
These works treat the situation in which the set of stationary solutions forms, in a neighborhood of an equilibrium~$u_*$, 
a finite-dimensional ${\rm C}^1$-manifold whose dimension coincides with that of the kernel of the linearized operator at $u_*$.
The resulting generalized principle of linearized stability yields  stability of $u_*$ and exponential convergence of solutions starting sufficiently close to $u_*$ 
toward a (possibly different) stationary solution near $u_*$, provided that $0$ is a semi-simple eigenvalue of the linearized operator and that the remainder of the spectrum lies in the open left half-plane.
This generalized principle is particularly valuable in applications, as it provides sharp convergence results under minimal regularity and spectral assumptions, 
and has been successfully applied to a wide range of nonlinear models, including quasilinear parabolic equations, free-boundary problems, fluid dynamics, phase transition models, and geometric evolution equations.

In this research we establish a similar result as in~~\cite{PS16, PSZ9a, PSZ09c}  without assuming a maximal property. 
Instead, we work in general interpolation spaces with full flexibility with respect to the choice of interpolation methods.
A key ingredient of our analysis is the concept of evolution operators, which is used to reformulate the evolution problem~\eqref{CP} as a fixed-point problem for~$u$ in suitable interpolation spaces between $E_0$ and $E_1$.
In particular, our first main result, see Theorem~\ref{MT1} below, provides linearized stability of equilibria for quasilinear problems within the well-established parabolic theory of~\cite{Am93}. We thus also extend the results of
~\cite{CDM25}, where the semilinear case was considered (see Remark~\ref{R:aMT1}).
In addition, we extend this theory  in Theorem~\ref{MT2} to the case of inclusions
$\mathrm{dom}(f)\subsetneq \mathrm{dom}(A)$ for the domains of the function~$u\mapsto f(u)$ and the quasilinear term~$u\mapsto A(u)$ for which the well-posedness and stability theory developed in~\cite{MRW25, MSW25} is formulated in phase spaces lying between $\mathrm{dom}(f)$ and $\mathrm{dom}(A)$. 
 Hence, this lowers the regularity assumption on $f$ and includes a limiting critical case which, in some applications, is a function space that is scaling invariant for the underlying evolution equation.  
 
The relevance of our abstract results for applications is illustrated in Example~\ref{Ex1}-Example~\ref{Ex3} in the context of the Hele-Shaw problem with surface tension, the fractional mean curvature flow (for periodic graphs), and a quasilinear evolution problem in critical scaling-invariant spaces.\medskip

Throughout this paper we assume that  $E_0$ and $E_1$ are Banach spaces over $\K\in\{\R,\C\}$ with continuous and dense embedding $E_1\hookrightarrow E_0$. For each~$\theta \in (0,1)$ we fix an arbitrary admissible interpolation functor~$(\cdot,\cdot)_\theta$  of
exponent $\theta$ (see \cite[I.~Section~2.11]{LQPP}) and define~${E_\theta := (E_0,E_1)_\theta}$ as the corresponding interpolation space with norm $\|\cdot\|_\theta$.

\subsection*{Linearized Stability of Non-Isolated Equilibria}

We begin with the case  of equal domains $\mathrm{dom}(f)=\mathrm{dom}(A)$. To this end, we fix  exponents
\begin{subequations}\label{assA}
\begin{equation}\label{as1}
 0<\gamma\leq \beta<\alpha<1
\end{equation}
 and assume, for some open subset $O_\beta$ of $E_\beta$, that
 \begin{equation}\label{as2A}
(A,f)\in {\rm C}^{1}\big(O_\beta,\mathcal{H}(E_1,E_0) \times E_\gamma\big),
\end{equation}
\end{subequations}
where $\mathcal{H}(E_1,E_0)$  denotes the open subset of $\mathcal{L}(E_1,E_0)$ consisting of generators of strongly continuous analytic semigroups on $E_0$.

We first recall from \cite{Am93} (see also \cite[Theorem~1.1]{MW20}) that the quasilinear Cauchy problem~\eqref{CP} is locally well-posed in~$O_\alpha$, where 
$O_\theta := O_\beta \cap E_\theta$ for $\theta \in (\beta,1]$:

\begin{thm}\label{T1}
Assume~\eqref{assA}. Then, for each $u^0\in O_\alpha$, the Cauchy problem \eqref{CP} possesses a unique maximal classical solution
\begin{equation} \label{regu}
u=u(\cdot;u^0)\in {\rm C}^1\big((0,t^+),E_0\big)\cap {\rm C}\big((0,t^+),E_1\big)\cap  {\rm C}^{\alpha-\theta} \big([0,t^+),E_\theta\big)
\end{equation}
for every $0\le \theta\le \alpha$,   where  $t^+=t^+(u^0)\in(0,\infty]$ is the maximal existence time.
Moreover, if the orbit $u([0,t^+(u^0));u^0)$ is relatively compact in $O_\alpha$, then $t^+(u^0)=\infty$. 
\end{thm}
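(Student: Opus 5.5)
The plan is to follow Amann's approach \cite{Am93}: freeze the quasilinear coefficient, use parabolic evolution operators, and close with a fixed-point argument in a Hölder-type space. Fix $u^0\in O_\alpha$ and choose $\bar r>0$ with $\bar B_{E_\beta}(u^0,\bar r)\subset O_\beta$. On this ball, after shrinking $\bar r$, the maps $A$ and $f$ are Lipschitz into $\mathcal{L}(E_1,E_0)$ and $E_\gamma$, since they are ${\rm C}^1$ and hence locally Lipschitz. By the perturbation theory for generators, there are $\kappa\ge1$ and $\omega>0$ such that $A(v)\in\mathcal{H}(E_1,E_0;\kappa,\omega)$ for all $v$ in the ball.

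For $T\in(0,1]$ and $\rho\in(0,\alpha-\beta)$, set
\[
\mathcal{V}_T:=\Bigl\{v\in {\rm C}^{\rho}\bigl([0,T],E_\beta\bigr):\ v(0)=u^0,\ \|v(t)-u^0\|_\beta\le \bar r,\ [v]_{\rho}\le 1\Bigr\}.
\]
For $v\in\mathcal{V}_T$, the map $t\mapsto A(v(t))$ is $\rho$-Hölder continuous in $\mathcal{L}(E_1,E_0)$. Amann's theory \cite[II.\ Section~4]{LQPP} therefore gives a unique parabolic evolution operator $U_{A(v)}(t,s)$, with bounds in $\mathcal{L}(E_\theta,E_\vartheta)$ of the form $c\,(t-s)^{\theta-\vartheta}$ that are uniform in $v\in\mathcal{V}_T$. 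It also gives the Lipschitz estimate
\[
\|U_{A(v)}(t,s)-U_{A(w)}(t,s)\|_{\mathcal{L}(E_\theta,E_\vartheta)}\le c\,(t-s)^{\theta-\vartheta}\|v-w\|_{{\rm C}([0,T],E_\beta)}.
\]
Using these, define
\[
\Phi(v)(t):=U_{A(v)}(t,0)u^0+\int_0^t U_{A(v)}(t,s)f(v(s))\,\rd s .
\]

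The next step is to show that $\Phi$ is a contraction on $\mathcal{V}_T$ for small $T$, measuring distances in ${\rm C}([0,T],E_\beta)$.

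\begin{itemize}
\item \emph{Initial term.} Since $u^0\in E_\alpha$, the term $U_{A(v)}(\cdot,0)u^0$ lies in ${\rm C}^{\alpha-\beta}([0,T],E_\beta)$.
\item \emph{Integral term.} It lies in ${\rm C}^{1+\gamma-\beta-\epsilon}$ with values in $E_\beta$, because $f(v)$ is bounded in $E_\gamma$ and $\gamma\le\beta$.
\item \emph{Smallness.} Both pieces gain a small power of $T$, since $\rho<\alpha-\beta$ and $\rho<1+\gamma-\beta$.
\end{itemize}

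This is the main obstacle: the Hölder seminorm of $U_{A(v)}(\cdot,0)u^0-u^0$ only becomes small, with constant $[\,\cdot\,]_\rho\le cT^{\alpha-\beta-\rho}$, if we exploit that $u^0\in E_\alpha$ rather than merely $E_\beta$. The uniform constants in the evolution operator estimates must also be checked carefully.

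The fixed point $u$ is a mild solution. Because $f(u)\in{\rm C}^{\rho'}((0,T],E_\gamma)$ and $A(u)$ is Hölder, standard regularity for linear evolution equations gives $u\in{\rm C}^1((0,T],E_0)\cap{\rm C}((0,T],E_1)$. The interpolation estimates above then yield ${\rm C}^{\alpha-\theta}([0,T],E_\theta)$ for $0\le\theta\le\alpha$. Uniqueness among classical solutions with this regularity follows because any such solution belongs to $\mathcal{V}_{T'}$ for small $T'$ and is thus a fixed point. Gluing local solutions then gives the maximal solution on $[0,t^+)$.

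For the global existence criterion, suppose $t^+<\infty$ while the orbit $K$ is relatively compact in $O_\alpha$. By compactness and continuity of $A$ and $f$, the constants $\bar r$, $\kappa$, $\omega$ and the existence time $T$ can be chosen uniformly for all initial data in a neighborhood of $\overline K$ in $E_\alpha$. Restarting the problem at time $t^+-T/2$ then extends the solution beyond $t^+$, which is a contradiction.
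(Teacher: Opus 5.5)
Your proposal is correct and follows the standard route. The paper does not prove Theorem~\ref{T1} itself but quotes it from \cite{Am93} (see also \cite[Theorem~1.1]{MW20}), where it is established by exactly this fixed-point approach: freeze the quasilinear part, use the uniform evolution-operator and stability estimates of \cite[Chapter~II]{LQPP}, run a contraction in a H\"older ball of ${\rm C}^\rho([0,T],E_\beta)$, and obtain the continuation criterion from a uniform local existence time on compact subsets of $O_\alpha$.

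One inessential point to tidy up: for the integral term you only need some H\"older exponent in $E_\beta$ larger than $\rho$. The exponent $1-\beta$ already follows from $\|U_{A(v)}(t,s)\|_{\mathcal{L}(E_\gamma,E_1)}\le c(t-s)^{\gamma-1}$ and $\|U_{A(v)}(t,s)-1\|_{\mathcal{L}(E_1,E_\beta)}\le c(t-s)^{1-\beta}$, so you need not establish the sharper exponent $1+\gamma-\beta-\epsilon$.
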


We build upon this well-posedness result to investigate stability properties of equilibria that form a finite-dimensional manifold. Our approach is inspired by \cite{PSZ9a} and follows the strategy developed therein.
To state the precise result, we need some preparation and notation.\medskip

Denote  the set of equilibria of~\eqref{CP} by $\mathcal{E}$, that is,
\[
\mathcal{E} := \{ u \in O_1 \,:\, A(u)u + f(u) = 0 \}.
\]
We study the asymptotic behavior of solutions $u(\cdot;u^0)$ with initial data $u^0 \in O_\alpha$
 in a neighborhood of a fixed equilibrium $u_* \in \mathcal{E}$ on which we impose suitable assumptions specified below.
We assume that there exist a neighborhood~$U$ of $0$ in $\R^m$, a function~${\Psi \in \mathrm{C}^2(U,E_1)}$, and a constant $r>0$ such that
\begin{equation}\label{E}
\Psi(U)= \mathcal{E}\cap \mathbb{B}_{E_1}(u_*,r),\qquad \Psi(0)=u_*,\qquad \mathrm{rank}(\partial \Psi(0))=m.
\end{equation}
Assumption~\eqref{E} guarantees that, in a neighborhood of $u_*$, the set of equilibria $\mathcal{E}$ forms an $m$-dimensional $\mathrm{C}^2$-manifold.

Introducing $v:=u-u_*$, we linearize problem \eqref{CP} at $u_*$ to obtain the form
\begin{equation}\label{CPv}
v'=A_*(v)v+g(v),\quad t>0,\qquad v(0)=u^0-u_*,
\end{equation}
where,  given $v\in O_\beta^*:=O_\beta-u_*$, we defined
$$
A_*(v)w:=A(u_*+v)w+(\partial A(u_*)[w]) u_* +\partial f(u_*)w, \qquad w\in E_1,
$$
and
$$
g(v):=f(u_*+v)-f(u_*)-\partial f(u_*)v+\big(A(u_*+v)-A(u_*)-(\partial A(u_*)v)\big) u_*.
$$
It follows from  \eqref{as2A}  that
\begin{equation*}
(A_*,g)\in {\rm C}^{1}\big(O_\beta^*,\mathcal{L}(E_1,E_0) \times E_0\big)
\end{equation*}
with
\begin{equation}\label{g}
g(0)=0 ,\qquad \partial g(0)=0.
\end{equation}  
Moreover, setting $\psi(\xi):=\Psi(\xi)-u_*$ for $\xi\in U$, the first condition of~\eqref{E} entails that
\begin{equation}\label{equil}
A_*(\psi(\xi))\psi(\xi)+g(\psi(\xi))=0, \qquad \xi\in U.
\end{equation}
Differentiating this identity at $\xi=0$ and using that $\psi(0)=0$ and $\partial g(0)=0$ readily implies that $A_*(0)\partial\psi(0)=0$. 
That is, $T_{u_*}\mathcal{E}\subset \mathrm{ker}(A_*(0))$, with $T_{u_*}\mathcal{E}= \mathrm{rg}(\partial \Psi(0))$ denoting the tangent space of $\mathcal{E}$ at $u_*$, and
$$
A_*(0)=A(u_*)+(\partial A(u_*)[\cdot ]) u_* +\partial f(u_*).
$$
In fact, we shall further assume that
\begin{subequations}\label{assB}
\begin{equation}\label{tangent}
T_{u_*}\mathcal{E}= \mathrm{ker}(A_*(0)),
\end{equation}
that $0$ is a semi-simple eigenvalue of $A_*(0)$, i.e.,
\begin{equation}\label{semisimple}
\text{$\mathrm{rg}(A_*(0))$ is closed and}\ E_0=\mathrm{ker}(A_*(0)) \oplus \mathrm{rg}(A_*(0)),
\end{equation}
and that
\begin{equation}\label{spectrum}
\sigma(A_*(0))\setminus\{0\}\subset [\mathrm{Re}\, z<0].
\end{equation}
\end{subequations}
In particular, $0$ is an isolated spectral point of $\sigma(A_*(0))$ which yields the disjoint union 
$$\sigma(A_*(0))=\{0\}\cup \sigma_s(A_*(0)),$$  
$$
-\omega_0:=\sup\{\mathrm{Re}\, \lambda\,:\, \lambda\in \sigma_s(A_*(0))\}<0.
$$
We point out that assumptions~\eqref{E} and \eqref{assB} express the fact that the equilibrium solution~$u_*$ is {\it normally stable}~\cite{PS16,PSZ9a,PSZ09c}.

Let $P\in \mathcal{L}(E_0)$ be the spectral projection  onto $\mathrm{ker}(A_*(0))$ corresponding to the spectral set~$\{0\}$ of~$A_*(0)$,  see e.g. \cite[Appendix A]{L95}, and denote by 
 $Q:=1-P$ the associated projection of~$E_0$ onto $\mathrm{rg}(A_*(0))$. 
 Then $\mathrm{rg}(P)=\mathrm{ker}(A_*(0))\subset E_1$  and~${\mathrm{rg}(Q)=\mathrm{rg}(A_*(0))}$.
Moreover, setting $E_\theta^0:=P(E_\theta)$ and $E_\theta^s:=Q(E_\theta)$, we obtain the topological sum decompositions
\begin{subequations}\label{Astable}
\begin{equation}\label{Astable1}
E_\theta =E_\theta^0\oplus E_\theta^s, \qquad \theta\in [0,1],
\end{equation}
and, since $\mathrm{rg}(P)=\mathrm{ker}(A_*(0))= E_1^0$, we have in fact that 
\begin{equation}\label{Astable1'}
E_\theta^0=\mathrm{ker}(A_*(0))= E_1^0, \qquad \theta\in [0,1].
\end{equation}
The decomposition \eqref{Astable1} reduces~$A_*(0)$  into
\begin{equation}\label{Astable2}
A_*(0)=0\oplus A_*^s(0)
\end{equation}
 and, in view of~\cite[Appendix A]{L95}, the stable part $A_*^s(0)$ satisfies
\begin{equation}\label{Astable3}
A_*^s(0)\in \mathcal{H}(E_1^s,E_0^s)\qquad\text{and}\qquad \sigma(A_*^s(0))= \sigma_s(A_*(0))\subset [\mathrm{Re}\, z\le -\omega_0].
\end{equation}
\end{subequations}

Finally, in order to state our stability result we require a further, technical assumption: There are $\zeta\in [0,1)$ and 
Banach spaces $F$ and $G$ such that
\begin{subequations}\label{assC}
\begin{equation}\label{assC1}
\qquad E_{\zeta}\hookrightarrow G, \qquad    P \in  \mathcal{L}(F,P(F)) , \qquad P(F)\hookrightarrow E_1^0,
\end{equation}
and 
\begin{equation}\label{110b}
 \|A(w)-A(u_*)\|_{\mathcal{L}(G,F)}\le L\|w-u_*\|_{E_\beta}, \qquad w\in O_\beta.
\end{equation}
\end{subequations}
 Clearly, we may actually  assume without loss of generality that $\alpha<\zeta<1$. 

\begin{rem}
Assumption~\eqref{assC} constitutes an essential ingredient of our analysis as it compensates for the lack of maximal regularity properties of the quasilinear part. In fact, it allows us to handle the semilinear term~$g$ in the reformulation~\eqref{CPv} of~\eqref{CP}, which is only $E_0$-valued.
 We emphasize, however, that this assumption imposes no significant restriction in many applications and is easily checked,  see Example~\ref{Ex1}-Example~\ref{Ex3}, and in particular~Proposition~\ref{PPP}.
In fact, in applications the operator $P\in \mathcal{L}(F,P(F))$ is usually an extension of the projection~$P\in \mathcal{L}(E_0)$ and $P(F)=E_1^0$.
\end{rem}

 We also note that,  since $E_1^0$ is  finite-dimensional, all norms on~$E_1^0$ are equivalent, 
and we equip $E_1^0$ with $\|\cdot\|_1$. \medskip
 
We shall prove that the normally stable equilibrium $u_*$  for~\eqref{CP} is stable.
  Moreover, we shall show that for any initial value~$u^0$ sufficiently close to $u_*$ in $E_\alpha$
   the corresponding maximal solution $u(\cdot;u^0)$ is globally defined and converges exponentially fast to an  equilibrium~$\hat{u}_* \in \mathcal{E}$. 
The asymptotic limit~$\hat{u}_* \in \mathcal{E}$ is uniquely determined by the initial value~$u^0$, although it may vary with $u^0$.
 
\begin{thm}\label{MT1}
Assume~\eqref{assA} and, for the equilibrium solution $u_*\in\mathcal{E}$, assume~\eqref{E},~\eqref{assB}, and~\eqref{assC}. Then $u_*$ is stable in $E_\alpha$. Moreover, for any given $\omega\in(0,\omega_0)$ there exist constants~$\delta>0$ and~${K\geq 1}$ such that for each  $u^0\in \mathbb{B}_{E_\alpha}(u_*,\delta)$, the maximal solution~$u(\cdot;u^0)$ to~\eqref{CP} is globally defined, and there exists a unique $\hat u_*\in \mathcal{E}$ such that
\begin{equation}\label{des:expstab}
\|u(t;u^0)-\hat u_*\|_{E_\alpha}\leq K e^{-\omega t}\|u^0-u_*\|_{E_\alpha},\qquad t\geq 0.
\end{equation}
\end{thm}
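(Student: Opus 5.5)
We follow the Prüss--Simonett--Zacher strategy of \cite{PSZ9a} and parametrize the equilibria near $u_*$ as graphs over the kernel. Since $P\partial\psi(0)$ is an isomorphism from $\R^m$ onto $E_1^0=\ker(A_*(0))$ by \eqref{tangent} and \eqref{E}, the inverse function theorem yields a ${\rm C}^2$-map $\phi:\mathbb{B}_{E_1^0}(0,\rho_0)\to E_1^s$ with $\phi(0)=0$ and $\partial\phi(0)=0$. Moreover, near $0$ the set $\mathcal{E}-u_*$ is exactly $\{x+\phi(x)\}$. Writing $v=x+y+\phi(x)$ with $x=Pv$ and $y=Qv-\phi(Pv)$, the identity \eqref{equil} gives
\[
A_*(x+\phi(x))(x+\phi(x))+g(x+\phi(x))=0.
\]
Subtracting this from \eqref{CPv} leads to the normal-form system
\begin{align*}
x'&=P\,R(x,y),\\
y'&=A_*^s(0)y+Q\,R(x,y)-\partial\phi(x)P\,R(x,y),
\end{align*}
where
\[
R(x,y)=\big(A_*(v)-A_*(0)\big)v+g(v)-\big(A_*(x+\phi(x))-A_*(0)\big)(x+\phi(x))-g(x+\phi(x)).
\]
The key structural point is that $R(x,0)=0$, so the equilibria correspond to $y=0$. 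Using \eqref{g}, the ${\rm C}^1$-regularity of $(A,f)$, and the fact that $\partial A$ is Lipschitz only in the $E_\beta$-norm, we aim for an estimate of the form
\[
\|R(x,y)\|_{0}\le \eta\big(\|y\|_{E_\alpha}+\|y\|_{E_1}\big)
\]
for $\|x\|_1+\|y\|_{E_\alpha}$ small, with $\eta$ small.

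We then solve the problem by evolution operators rather than maximal regularity. Given $u^0$ close to $u_*$ in $E_\alpha$, Theorem~\ref{T1} provides the local solution. Freezing $A(u(t))$, we set $\mathcal{A}(t):=A_*(v(t))$ and split $\mathcal{A}(t)=A_*(0)+(A(u(t))-A(u_*))$. The $y$-component is written via the variation-of-constants formula for the exponentially stable semigroup $e^{tA_*^s(0)}$ on $E_0^s$, with decay rate $\omega_0$ by \eqref{Astable3}. This gives, for $\theta\in\{\alpha,\zeta\}$,
\[
\|y(t)\|_{E_\theta}\le Me^{-\omega t}\|y(0)\|_{E_\alpha}+M\int_0^t(t-s)^{\alpha-1-\theta}e^{-\omega(t-s)}\|\mathrm{rhs}(s)\|\,\rd s .
\]
The obstruction is that the quasilinear perturbation $(A(u)-A(u_*))y$ only lands in $E_0$ when $y\in E_1$, which is too singular to integrate, and that the $x$-equation carries the $E_0$-valued term $g$. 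Here assumption~\eqref{assC} is used. We estimate $(A(u)-A(u_*))w$ in $F$ for $w\in E_\zeta\hookrightarrow G$ with a factor $\|v\|_{E_\beta}$. Then $P$ maps it boundedly into the finite-dimensional $E_1^0$, which controls the $x$-equation by $\|v\|_\beta\|y\|_\zeta$. For the stable part, we instead treat $A(u(t))$ with the evolution operator $U_{\mathbb{A}}(t,s)$ of $A_*(v(\cdot))$ restricted appropriately, using the Hölder continuity in \eqref{regu}. We then use the perturbation estimate for $U$ versus $e^{tA_*(0)}$ from \cite{Am93}, which yields uniform exponential decay $e^{-\omega(t-s)}$ on $QE$ as long as $\|v\|_{E_\alpha}$ stays small.

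Finally, a continuity (bootstrap) argument closes the estimates. On the maximal interval where $\|x(t)\|_1+\|y(t)\|_{E_\alpha}\le\varepsilon$, we derive
\[
\|y(t)\|_{E_\zeta}+\|y(t)\|_{E_\alpha}\le Ce^{-\omega t}\|y(0)\|_\alpha
\]
for small $\varepsilon$ via a singular Gronwall lemma. Integrating then gives
\[
\|x'(t)\|_1\le C\varepsilon e^{-\omega t}\|y(0)\|_\alpha ,
\]
so that $\|x(t)\|_1\le\|x(0)\|_1+C\|y(0)\|_\alpha$ and the bound stays below $\varepsilon$ if $\delta$ is small. Relative compactness in $O_\alpha$ then gives $t^+=\infty$ by Theorem~\ref{T1}. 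Moreover, $x(t)\to x_\infty$ exponentially and $y(t)\to0$, so $u(t)\to\hat u_*:=u_*+x_\infty+\phi(x_\infty)\in\mathcal{E}$. Since $\|x_\infty-x(t)\|\le Ce^{-\omega t}\|y(0)\|$ and $\phi$ is Lipschitz, we obtain \eqref{des:expstab} with $\|u^0-u_*\|_\alpha$ dominating $\|y(0)\|_\alpha$, and stability follows from the same bounds. The main obstacle is the uniform exponential estimate for the stable part with the quasilinear perturbation, since there is no maximal regularity. I would first try to absorb $(A(u)-A(u_*))$ into an evolution operator whose decay is inherited from $A_*^s(0)$ by the Hölder-perturbation results of \cite{Am93}, combined with \eqref{110b} for the cross terms into $E_0^0$.
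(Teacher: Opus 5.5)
\textbf{There is a genuine gap in the step you yourself call the main obstacle: the uniform exponential decay of the evolution operator on the stable part.} Your graph parametrization of $\mathcal{E}$, the $(x,y)$-system, and the use of \eqref{assC} to bound $P[A(u)-A(u_*)]y$ in $E_1^0$ by $\eta\|y\|_{E_\zeta}$ all match the paper.

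The decay estimate for evolution operators in \cite[II.Theorem~5.1.1]{LQPP} has the form $\|U(t,s)\|\le Me^{(c_0(\rho)\Lambda^{1/\rho}-\omega_0+2\varepsilon)(t-s)}$. Here $\Lambda$ bounds the $\rho$-H\"older seminorm of $t\mapsto\mathcal{A}(t)$ on the \emph{whole} time interval. Smallness of $\|v(t)\|_{E_\alpha}$ only makes $\mathcal{A}(t)-A_*^s(0)$ small pointwise in $\mathcal{L}(E_1^s,E_0^s)$; it gives no control of $\Lambda$. The H\"older regularity in \eqref{regu} is only local in time, with solution-dependent constants. So your claim of uniform decay ``as long as $\|v\|_{E_\alpha}$ stays small'' is unfounded. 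Without maximal regularity, a perturbation that is small in $\mathcal{L}(E_1,E_0)$ but uncontrolled in time cannot be absorbed; this is the same obstruction you identified for the semigroup approach.

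\textbf{The missing idea is to put a uniform H\"older bound into the bootstrap.} The paper works in $\mathcal{M}_\eta(T)$, which requires $\|x(t)-x(s)\|_{E_1^0}+\|y(t)-y(s)\|_{E_\beta^s}\le N|t-s|^\rho$. The constant $N$ is fixed by $c_0(\rho)(LN)^{1/\rho}=\varepsilon$, so the rate $\omega+\varepsilon$ holds independently of $T$. The proof then recovers this bound with constant $N/4$:
for $y$, via the H\"older estimate \eqref{ww} for mild solutions \cite[II.Theorem~5.3.1]{LQPP}, which holds in $E_{\bar\alpha}$ with $\rho=\alpha-\bar\alpha$ and $\bar\alpha>\beta$;
for $x$, by integrating $\|S\|_{E_1^0}\le\eta\|y\|_{E_\zeta^s}\le C(1+\sigma^{-\zeta})e^{-(\omega+\varepsilon/2)\sigma}$, which requires $\rho\le 1-\zeta$.

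You must also name the generator. $E^s$ is not invariant under $A_*(v)$, so ``restricted appropriately'' has to become an explicit operator on $E_1^s$. An example is the paper's $\mathcal{A}(x,y)=A_*^s(0)+Q[A(u)-A(u_*)]-\partial\phi(x)P[A(u)-A(u_*)]$. Its Lipschitz dependence on $(x,y)$, using $\phi\in{\rm C}^2$, transfers the H\"older bound on $(x,y)$ to $t\mapsto\mathcal{A}(x(t),y(t))$.

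The same uniform H\"older bound is what gives global existence. An $E_\alpha$-bound on the orbit does not make it relatively compact in $O_\alpha$, since no compact embeddings are assumed. The paper instead uses $u\in{\rm BUC}^\rho([0,t^+),E_{\bar\alpha})$: if $t^+<\infty$, the orbit is then relatively compact in $O_{\bar\alpha}$, and Theorem~\ref{T1} applied with $\alpha$ replaced by $\bar\alpha$ gives a contradiction.

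Two smaller slips. First, for $E_0$-valued forcing the kernel in $E_\theta$ is $(t-s)^{-\theta}$, not $(t-s)^{\alpha-1-\theta}$; the latter is non-integrable for both $\theta=\alpha$ and $\theta=\zeta$. Second, since $\zeta>\alpha$, the bound $\|y(t)\|_{E_\zeta}\le Ce^{-\omega t}\|y(0)\|_{E_\alpha}$ fails near $t=0$. One only gets $(1+t^{-\zeta})e^{-(\omega+\varepsilon/2)t}$, which is integrable because $\zeta<1$ and suffices for the $x$-equation.
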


The proof of Theorem~\ref{MT1} is given in Section~\ref{Sec:2}, and we refer to Example~\ref{Ex1} and Example~\ref{Ex2} for two applications of this result.\medskip

 Of course, Theorem~\ref{MT1} also applies to the semilinear case where $A(u)=A$ is independent of $u$. In this case, however, less restrictive assumptions are needed as observed in~\cite{CDM25}:

\begin{rem}\label{R:aMT1}\,
{\bf (i)} In \eqref{E} we assume, for simplicity, that the set of equilibria forms an $m$-dimensional ${\rm C}^{2}$-manifold in a neighborhood of $u_*$. 
However, it suffices  that this set is a ${\rm C}^{2-}$-manifold, or even a ${\rm C}^{1+\delta}$-manifold for some $\delta \in (0,1)$. 
 In the semilinear case, a~${\rm C}^{1}$-manifold is actually sufficient (see Footnote~\ref{foot}).\smallskip

\noindent{\bf (ii)} In the semilinear case, condition~\eqref{as1} may be replaced by the slightly more general requirement 
\[
0 \le \gamma < \alpha < 1,
\]
i.e. the choice $\gamma = 0$ is admissible, see~\cite[Theorem~1.2]{MW_PRSE} and the proof of Theorem~\ref{MT1}. 
\smallskip

\noindent{\bf (iii)} Assumption~\eqref{assC} is automatically satisfied in the semilinear case. 

\end{rem}

\subsection*{Linearized Stability of Non-Isolated Equilibria in Time-Weighed Spaces}
In some applications, the semilinear term $u\mapsto f(u)$ in \eqref{CP} requires higher regularity of $u$ than the quasilinear part~$u\mapsto A(u)$. 
In such situations, Theorem~\ref{MT1} no longer provides optimal results. 
Using the well-posedness theory developed in \cite{MW_PRSE, MRW25}, we now extend Theorem~\ref{MT1} to this setting, following an approach analogous to that in \cite[Theorem~1.3]{MSW25}.\medskip

With interpolation spaces $E_\theta$, $\theta\in[0,1]$, as defined above, we fix 
\begin{subequations}\label{Hypo}
\begin{equation}\label{Hypo1}
0<\gamma< \beta< \xi<1, \qquad  q\ge 1,
\end{equation}
and impose on the quasilinear part in \eqref{CP} the same assumptions as before, namely 
\begin{equation}\label{HypoA1}
A\in {\rm C}^1\big(O_\beta ,\mathcal{H}(E_1 ,E_0) \big).
\end{equation}
The main difference to the previous setting concerns the semilinear part satisfying
\begin{equation}\label{Hypof1}
f\in {\rm C}^1(O_\xi ,E_\gamma)
\end{equation}
and thus  possibly requiring more regularity as $\xi>\beta$.
Moreover, we additionally assume  for each~$R>0$ that there exists a constant~$C(R)>0$  such that
\begin{equation}\label{Hypof2}
\begin{aligned}
&\|f(u)-f(v)\|_{E_\gamma}\\
&\le  C(R)\big[1+\|u\|_{\xi}^{q-1}+\|v\|_{\xi}^{q-1}\big]\big[\big(1+\|u\|_{\xi}+\|v\|_{\xi}\big) \|u-v\|_{\beta}+\|u-v\|_{\xi}\big]
\end{aligned}
\end{equation}
 for  $u,v\in O_\xi\cap \bar{\mathbb{B}}_{E_\beta}(0,R)$.
The quantitative estimate~\eqref{Hypof2}, involving the parameter~$q\ge 1$ that measures  
the growth of~$f$ with respect to $E_\xi$-terms,  
leads naturally to the use of time-weighted spaces, see~\eqref{tws} below, when studying the well-posedness  
of~\eqref{CP}. In \cite{MW_PRSE, MRW25}  this approach is developed under slightly milder   assumptions than stated in~\eqref{Hypo}, 
whereas \cite{PW17,PSW18,LeCroneSimonett20,HvNWV_III} address the problem in suitable maximal regularity frameworks.
The advantage of employing time weights for  the well-posedness of \eqref{CP}  lies in the use of phase spaces $E_\alpha$ with parameters  
$\alpha\in(\beta,\xi)$ for which the semilinearity $f$ need not be defined.  
To be more precise, we define the critical value $\alpha_{\rm crit}<\xi$ by
\begin{equation}\label{HypoX}
\alpha_{\rm crit}:=\frac{q\xi-1-\gamma}{q-1} \quad \text{if } q>1,
\qquad 
\alpha_{\rm crit}:=-\infty \quad \text{if } q=1,
\end{equation}
and consider initial data $u^0\in O_\alpha$ with
\begin{equation}\label{HypoXX}
\alpha_{\rm crit}\le \alpha\in(\beta,\xi).
\end{equation}
\end{subequations}

In the critical case $\alpha=\alpha_{\rm crit}\in(\beta,\xi)$,  
the well-posedness theory developed in~\cite{MRW25} requires the following additional assumption:  
there exists an interpolation functor $\{\cdot,\cdot\}_{\alpha/\xi}$  
of exponent $\alpha/\xi$, and for each $\eta\in\{\alpha,\beta,\xi\}$  
there  exists an interpolation functor $\{\cdot,\cdot\}_{\gamma/\eta}$  
of exponent $\gamma/\eta$, such that
\begin{equation}\label{Y}
E_\alpha \doteq \{E_0,E_\xi\}_{\alpha/\xi},
\qquad 
E_\gamma \doteq \{E_0,E_\eta\}_{\gamma/\eta},
\quad 
\eta\in\{\alpha,\beta,\xi\}.
\end{equation}
Property ~\eqref{Y} is not really restrictive in applications as it is automatically satisfied if, for each~$\theta\in\{\gamma,\beta,\alpha,\xi\}$, the interpolation functor~${(\cdot,\cdot)_\theta}$ is either the complex interpolation functor~$[\cdot,\cdot]_\theta$,  or the continuous interpolation functor~${(\cdot,\cdot)_{\theta,\infty}^0}$,
  or the real interpolation functor~$(\cdot,\cdot)_\theta=(\cdot,\cdot)_{\theta,p}$  with parameter $p\in [1,\infty]$ (see e.g.~\cite[I.Remarks~2.11.2~(b)]{LQPP}).\medskip
  
The local well-posedness of \eqref{CP} in $O_\alpha$ under hypotheses~\eqref{Hypo} and~\eqref{Y} 
was established in \cite{MRW25,MW_PRSE} and is summarized in \cite[Theorem~1.2]{MSW25}. 
We recall this result:

\begin{thm}\label{T2}
Assume~\eqref{Hypo}-\eqref{Y}. Then, for each $u^0\in O_\alpha$, the Cauchy problem~\eqref{CP} possesses a unique maximal classical solution $u=u(\cdot;u^0)$ such that
\begin{subequations}\label{solab}
\begin{equation}\label{sola}
\begin{aligned}
u &\in {\rm C}^1\big((0,t^+),E_0\big) \cap {\rm C}\big((0,t^+),E_1\big)\cap {\rm C}\big([0,t^+),O_\alpha\big) \cap {\rm C}^{\alpha-\beta}\big([0,t^+),E_\beta\big)
\end{aligned}
\end{equation}
and
\begin{equation}\label{solb}
\lim_{t\to 0} t^{\xi-\alpha}\|u(t)\|_{E_\xi} = 0,
\end{equation}  
\end{subequations}
where $t^+=t^+(u^0)\in(0,\infty]$ denotes the maximal existence time.
\end{thm}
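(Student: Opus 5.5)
The plan is to prove Theorem~\ref{T2} by a fixed-point argument in a time-weighted space, built on Amann's evolution operators. I would freeze the quasilinear part along a given path and treat $f$ as a perturbation, following \cite{Am93} for the linear theory and \cite{MRW25,MW_PRSE} for the weighted estimates.

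\textbf{The fixed-point space and the map.} Fix $u^0\in O_\alpha$, and let $T>0$ and $\rho\in(0,\alpha-\beta)$ be small. I would work in the set $\mathcal{V}_T$ of functions
\[
v\in {\rm C}([0,T],O_\alpha)\cap {\rm C}^{\rho}([0,T],E_\beta)\cap {\rm C}((0,T],E_\xi)
\]
satisfying three conditions:
\begin{itemize}[label=$\circ$]
\item $v(0)=u^0$;
\item $\|v(t)-u^0\|_{E_\beta}\le \varepsilon$ and $[v]_{\rho,E_\beta}\le N$;
\item $\|v\|_{\xi,\alpha,T}:=\sup_{0<t\le T} t^{\xi-\alpha}\|v(t)\|_{E_\xi}\le \varepsilon$.
\end{itemize}
For $v\in\mathcal{V}_T$, the map $t\mapsto A(v(t))$ is H\"older continuous with values in a compact subset of $\mathcal{H}(E_1,E_0)$. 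By \cite{Am93} it therefore generates a parabolic evolution operator $U_v(t,s)$, which satisfies the estimate
\[
\|U_v(t,s)\|_{\mathcal{L}(E_\theta,E_\eta)}\le c\,(t-s)^{\theta-\eta}, \qquad 0\le\theta\le\eta\le1,
\]
with a constant $c$ uniform in $v\in\mathcal{V}_T$. It also has the Lipschitz property
\[
\|U_v(t,s)-U_w(t,s)\|_{\mathcal{L}(E_\theta,E_\eta)}\le c\,(t-s)^{\theta-\eta}\sup\|v-w\|_{E_\beta}.
\]
I then define
\[
\Phi(v)(t):=U_v(t,0)u^0+\int_0^t U_v(t,s)f(v(s))\,\rd s,
\]
and aim to show that $\Phi$ is a contraction on $\mathcal{V}_T$ with respect to $\sup_t\|v-w\|_{E_\beta}+\|v-w\|_{\xi,\alpha,T}$.

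\textbf{Key estimates.} For the linear term, $t^{\xi-\alpha}\|U_v(t,0)u^0\|_{E_\xi}\le c$. Moreover, this quantity tends to $0$ as $t\to0$, uniformly in $v$. To see this, approximate $u^0$ by elements of $E_1$, which is possible since $E_1$ is dense in $E_\alpha$; this is what yields \eqref{solb}. For the integral term, \eqref{Hypof2} gives $\|f(v(s))\|_{E_\gamma}\le C(1+s^{-q(\xi-\alpha)})$. Combining this with the smoothing estimate from $E_\gamma$ to $E_\xi$ gives
\[
t^{\xi-\alpha}\int_0^t (t-s)^{\gamma-\xi}s^{-q(\xi-\alpha)}\,\rd s\le c\,t^{\,1+\gamma-\alpha-q(\xi-\alpha)}.
\]
The exponent is nonnegative precisely when $\alpha\ge\alpha_{\rm crit}$. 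Integrability at $s=0$ needs $q(\xi-\alpha)<1$, which also follows from $\alpha\ge\alpha_{\rm crit}$ and $\gamma<\xi$. The differences $\Phi(v)-\Phi(w)$ are handled the same way, using \eqref{Hypof2} and the Lipschitz dependence of $U_v$ on $v$. The $E_\alpha$-continuity and the $E_\beta$-H\"older bounds follow from the analogous estimates with target spaces $E_\alpha$ and $E_\beta$.

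\textbf{Main obstacle: the critical case $\alpha=\alpha_{\rm crit}$.} Here the exponent above vanishes, so shrinking $T$ gives no smallness. I would instead obtain smallness from the weighted norm itself. The nonlinear contribution is bounded by $c\,(\varepsilon^q+T^{\kappa})$ for some $\kappa>0$, and $\varepsilon$ can be made small by first choosing $T$ so that the linear term is small in $\|\cdot\|_{\xi,\alpha,T}$. The delicate point is to control the $E_\alpha$- and $E_\beta$-norms of the integral term without losing the singular weight. This is exactly where \eqref{Y} enters. I would interpolate between an $E_0$-bound and the weighted $E_\xi$-bound using $E_\alpha\doteq\{E_0,E_\xi\}_{\alpha/\xi}$. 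Similarly, I would use $E_\gamma\doteq\{E_0,E_\eta\}_{\gamma/\eta}$ to trade $E_\gamma$-integrability of $f(v(s))$ for bounds in $E_\eta$, $\eta\in\{\alpha,\beta,\xi\}$, so that the singular integrals are borderline but still bounded.

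\textbf{Conclusion.} Once the contraction is established, the fixed point $u=\Phi(u)$ is a mild solution on $[0,T]$. Because $t\mapsto f(u(t))$ is H\"older continuous in $E_\gamma$ on $(0,T]$, the linear theory of \cite{Am93} upgrades $u$ to a classical solution with the regularity \eqref{sola}. Uniqueness in the class \eqref{solab} follows from the same contraction estimates on short intervals. Finally, a standard continuation argument produces the maximal solution and the maximal existence time $t^+(u^0)$.
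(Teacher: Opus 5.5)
Your architecture matches the approach of \cite{MW_PRSE,MRW25}, which the paper does not reprove but simply quotes (cf.\ \cite[Theorem~1.2]{MSW25}): freeze $A$ along $v$, use the evolution operators $U_v$, and close a contraction in a time-weighted space where, for $\alpha=\alpha_{\rm crit}$, smallness comes from $\sup_{0<t\le T}t^{\xi-\alpha}\|v(t)\|_{E_\xi}$ rather than from a power of $T$. The gap is at the one point where the theorem is delicate. You take from \cite{Am93} the loss-free bounds $\|U_v(t,s)\|_{\mathcal{L}(E_\theta,E_\eta)}\le c(t-s)^{\theta-\eta}$ for all $0\le\theta\le\eta\le1$, uniformly in $v$, together with a Lipschitz bound for $U_v-U_w$ with the same singularity. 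For arbitrary admissible interpolation functors, the general theory as recorded in \eqref{ev0} supplies these only with a loss for intermediate indices. That is, when $0<\theta<\eta<1$ one only gets $(t-s)^{\theta_0-\eta}$ with $\theta_0<\theta$; this is the restriction $\vartheta_0<\vartheta$ there. It is also why Remark~\ref{RRa1} must replace the weight $t^{\xi-\alpha}$ by $t^{\mu}$ with $\mu>\xi-\alpha$ once \eqref{Y} is dropped. Such a loss is harmless when a positive power of $T$ is available. Here, however, it destroys the bound $t^{\xi-\alpha}\|U_v(t,0)u^0\|_{E_\xi}\le c\|u^0\|_{E_\alpha}$ on which \eqref{solb} rests. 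It also breaks every borderline integral at $\alpha=\alpha_{\rm crit}$; for instance $t^{\xi-\alpha}\int_0^t(t-s)^{\gamma-\epsilon-\xi}s^{-q(\xi-\alpha)}\,\rd s=c\,t^{-\epsilon}$.

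Hypothesis \eqref{Y} is exactly what removes this loss, and it acts on the operators, not on the solution. Interpolating $\|U_v(t,s)\|_{\mathcal{L}(E_0,E_\eta)}\le c(t-s)^{-\eta}$ and $\|U_v(t,s)\|_{\mathcal{L}(E_\eta)}\le c$ in the source space via $E_\gamma\doteq\{E_0,E_\eta\}_{\gamma/\eta}$ yields $\|U_v(t,s)\|_{\mathcal{L}(E_\gamma,E_\eta)}\le c(t-s)^{\gamma-\eta}$. This covers precisely the three targets $\eta\in\{\alpha,\beta,\xi\}$ in which the integral term must be estimated. Likewise, $E_\alpha\doteq\{E_0,E_\xi\}_{\alpha/\xi}$ yields $\|U_v(t,0)\|_{\mathcal{L}(E_\alpha,E_\xi)}\le c\,t^{\alpha-\xi}$ for the linear term. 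The contraction step needs the corresponding loss-free bounds for $U_v-U_w$ as well.

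The use of \eqref{Y} you propose instead does not close. You interpolate an $E_0$-bound and the weighted $E_\xi$-bound of $I(t):=\int_0^tU_v(t,s)f(v(s))\,\rd s$. At criticality these bounds are of order $\varepsilon^q t^{\alpha-\gamma}$ and $\varepsilon^q t^{\alpha-\xi}$. Then $E_\alpha\doteq\{E_0,E_\xi\}_{\alpha/\xi}$ only gives $\|I(t)\|_{E_\alpha}\lesssim\varepsilon^q t^{-\gamma(1-\alpha/\xi)}$, which blows up as $t\to0$. Conversely, if the sharp bounds you assumed were available, the $E_\alpha$- and $E_\beta$-estimates of $I$ would be immediate and \eqref{Y} would play no role at all; this shows the difficulty was misplaced.

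Two smaller points:
\begin{enumerate}
\item Integrability of $s^{-q(\xi-\alpha)}$ follows from $\alpha\ge\alpha_{\rm crit}$ together with $\gamma<\alpha$, not $\gamma<\xi$.
\item The requirement $v\in{\rm C}([0,T],O_\alpha)$ in $\mathcal{V}_T$ is not preserved under limits in your contraction metric. The $E_\alpha$-continuity at $t=0$ should be proved a posteriori rather than built into the set.
\end{enumerate}
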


We can now state the stability result  for normally stable equilibria within this framework:
\begin{thm}\label{MT2}
Assume~\eqref{Hypo}-\eqref{Y} and, for the equilibrium~${u_*\in\mathcal{E}}$, assume  \eqref{E},~\eqref{assB}, and~\eqref{assC}. 
Then $u_*$ is  stable in $E_\alpha$. Moreover, for any given $\omega\in(0,\omega_0)$ there exist constants~$\delta>0$ and~${K\geq 1}$ such that for each  $u^0\in \mathbb{B}_{E_\alpha}(u_*,\delta)$, the maximal solution~$u(\cdot;u^0)$ to~\eqref{CP} is globally defined, and there exists a unique $\hat u_*\in \mathcal{E}$ such that
\begin{equation}\label{des:expstab2}
\|u(t;u^0)-\hat u_*\|_{E_\alpha}+t^{\xi-\alpha}\|u(t;u^0)-\hat u_*\|_{E_\xi}\leq K e^{-\omega t}\|u^0-u_*\|_{E_\alpha},\qquad t\geq 0.
\end{equation}
\end{thm}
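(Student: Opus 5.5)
We follow the strategy of the proof of Theorem~\ref{MT1}, modified to the time-weighted setting of \cite{MRW25,MSW25}. The first step is to reduce to the normal form. Since $\Psi$ is a $\mathrm{C}^2$-parametrization of $\mathcal{E}$ near $u_*$ and \eqref{tangent} holds, the map $\xi\mapsto P\psi(\xi)$ is a local $\mathrm{C}^2$-diffeomorphism from a neighborhood of $0\in\R^m$ onto a neighborhood of $0$ in $E_1^0$. We may therefore write the equilibria near $u_*$ as a graph $x\mapsto x+\phi(x)$ over $E_1^0$, where $\phi\in \mathrm{C}^2(\mathbb{B}_{E_1^0}(0,\rho),E_1^s)$ satisfies $\phi(0)=0$ and $\partial\phi(0)=0$. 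For a solution $v=u-u_*$ of \eqref{CPv} we set $x:=Pv$ and $y:=Qv-\phi(Pv)$. Projecting \eqref{CPv} with $P$ and $Q$ and using \eqref{equil} at $x+\phi(x)$ gives a coupled system. In it, $x'=PR(x,y)$ and $y'=A_*^s(0)y+QR(x,y)-\partial\phi(x)PR(x,y)$, where the remainder $R(x,y)$ vanishes when $y=0$ and is superlinearly small in $(x,y)$. The quasilinear contribution $(A(u_*+v)-A(u_*))v$ is only $E_0$-valued, and this is where \eqref{assC} enters. Its $P$-part is estimated in $E_1^0$ through $F$: since $E_\zeta\hookrightarrow G$, we obtain a bound by $L\|v\|_\beta\|y\|_\zeta$, up to terms that vanish along $\mathcal{E}$. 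Its $Q$-part is handled by the evolution operator of $A_*^s$ frozen along the solution, exactly as in Theorem~\ref{MT1}.

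The second step consists of the a priori estimates in time-weighted norms. For $T\in(0,t^+)$ we introduce
$$
\Phi(T):=\sup_{0<t\le T}e^{\omega t}\Big(\|y(t)\|_{E_\alpha}+t^{\xi-\alpha}\|y(t)\|_{E_\xi}+\|y(t)\|_{E_\zeta}\min\{t,1\}^{\zeta-\alpha}\Big)
$$
together with $\sup_{t\le T}\|x(t)\|_1$. We write $y$ through the variation-of-constants formula with the analytic semigroup $e^{tA_*^s(0)}$, which decays like $e^{-\omega_0 t}$ by \eqref{Astable3}. The quasilinear perturbation $(A(u)-A(u_*))$ is treated either via the parabolic evolution operator or as a small perturbation, and smoothing estimates $\|e^{tA_*^s}\|_{\mathcal{L}(E_\gamma,E_\theta)}\lesssim t^{\gamma-\theta}e^{-\omega t}$ are used throughout. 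The semilinear difference $f(u_*+v)-f(u_*)-\partial f(u_*)v$ is estimated with \eqref{Hypof2}: it is bounded by $[1+\|v\|_\xi^{q-1}]$ times $\|v\|_\xi$-type terms, and these are small because $v$ is small. The singular time integrals of the form $\int_0^t (t-s)^{\gamma-\xi}s^{-(\xi-\alpha)q}\,ds$ converge precisely because $\alpha\ge\alpha_{\rm crit}$. In the critical case we use \eqref{Y} and the convention \eqref{solb} that $t^{\xi-\alpha}\|v\|_\xi\to0$, which is the argument from \cite{MRW25,MSW25}. The outcome is an estimate
$$
\Phi(T)\le C\|u^0-u_*\|_\alpha+C\big(\Phi(T)+\sup_{t\le T}\|x\|_1\big)\Phi(T),
$$
and for $x$ we get $\|x(t)\|_1\le C\|u^0-u_*\|_\alpha+C\Phi(T)^2$ with $x'$ integrable in $t$ with exponential decay. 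We expect this step to be the main obstacle. The quasilinear $E_0$-valued term in the $x$-equation needs the integrability of $\|y(s)\|_\zeta$, which has the singularity $s^{\alpha-\zeta}$ at $s=0$ with $\zeta<1$. At the same time, the weighted $E_\xi$-terms from $f$ must be matched against the exponents in \eqref{Hypo}. We would first try to mimic Theorem~\ref{MT1} with $\zeta$ replaced by $\max\{\zeta,\xi\}$ and to check the integrability exponents case by case.

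The last step is a continuity argument. For $\delta$ small, the bootstrap gives $\Phi(T)\le 2C\|u^0-u_*\|_\alpha$ and $\|x(t)\|_1\le \rho/2$ uniformly in $T$. The solution therefore stays in a compact-in-time-controlled neighborhood, and a continuation argument based on Theorem~\ref{T2} (uniform existence time for data in small $E_\alpha$-balls) yields $t^+=\infty$. Since $\int_0^\infty\|x'\|_1\,dt<\infty$, the limit $x_\infty:=\lim_{t\to\infty}x(t)$ exists, and $\|x(t)-x_\infty\|_1\le Ce^{-\omega t}\|u^0-u_*\|_\alpha$. We set $\hat u_*:=u_*+x_\infty+\phi(x_\infty)\in\mathcal{E}$, which is unique as the limit. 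Then $u-\hat u_*=(x-x_\infty)+(\phi(x)-\phi(x_\infty))+y$, and combining the bounds on $x-x_\infty$ and $\Phi$ with $E_1^0\hookrightarrow E_\xi$ and the Lipschitz property of $\phi$ yields \eqref{des:expstab2}. Stability in $E_\alpha$ then follows at once.
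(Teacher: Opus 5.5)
Your second step is where the whole proof lives, and it is not carried out. You flag it yourself as the main obstacle, and the mechanisms you indicate do not work as stated.

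\textbf{The quasilinear term.} Treating $(A(u)-A(u_*))y$ as a small perturbation of $e^{tA_*^s(0)}$ is not possible. This term is only $E_0$-valued and bounded by $C\|v\|_{\beta}\|y\|_{E_1}$, so you would need control of $\|y(t)\|_{E_1}$, i.e.\ maximal regularity, which is exactly what is not assumed. The evolution-operator route of Theorem~\ref{MT1} instead needs the uniform H\"older bound $\|x(t)-x(s)\|_{E_1^0}+\|y(t)-y(s)\|_{E_\beta^s}\le N|t-s|^\rho$ on the whole existence interval (the set $\mathcal{M}_\eta(T)$, with $N$ fixed by \eqref{defN}) to obtain \eqref{ev0}. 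Your functional $\Phi(T)$ contains no such seminorm. Moreover, the tool used in Theorem~\ref{MT1} to propagate it, \eqref{ww}, requires the forcing in $L_\infty((0,t),E_0^s)$. Here the $f$-contribution to $q(x,y)$ is only controlled through $\|v(t)\|_\xi$, which is in general unbounded as $t\to0$.

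\textbf{The semilinear term.} The quadratic inequality you expect for $\Phi$ is not justified. Near $t=0$ only $t^{\xi-\alpha}\|v(t)\|_\xi$ is small, not $\|v(t)\|_\xi$. So the $\mathrm{C}^1$-smallness behind \eqref{e2} and \eqref{e1x} is unavailable there, and \eqref{Hypof2} only gives bounds linear in $y$ with $O(1)$ constants. Even where $\|v\|_\xi$ is small, $f\in\mathrm{C}^1$ gives $o(\|v\|_\xi)$, not $O(\|v\|_\xi^2)$. Closing your bootstrap would amount to redoing the local theory of \cite{MRW25,MSW25} inside the global argument. Also, Theorem~\ref{T2} as stated provides no uniform existence time; that is the content of Proposition~\ref{FF}.

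\textbf{The missing idea.} No new bootstrap is needed: parabolic smoothing reduces the statement to Theorem~\ref{MT1}.
\begin{enumerate}
\item Since $E_\xi\hookrightarrow E_\beta$, the hypotheses \eqref{assA}, \eqref{E}, \eqref{assB}, \eqref{assC} hold with $(\beta,\alpha)$ replaced by $(\xi,\nu)$ for any $\nu\in(\xi,1)$, and with $\zeta\in(\nu,1)$.
\item Theorem~\ref{MT1}, applied with some $\bar\omega\in(\omega,\omega_0)$, together with Remark~\ref{R2}, then gives global existence and $e^{-\bar\omega t}$-convergence in $E_\nu$. It also gives $\|\hat u_*-u_*\|_{E_1}\le K_1\|\bar u^0-u_*\|_{E_\nu}$ for $\bar u^0$ near $u_*$ in $E_\nu$.
\item Proposition~\ref{FF} supplies a uniform existence time $T$ and the Lipschitz bound \eqref{F1}, which includes the weighted $E_\nu$-norm. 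Hence, for fixed $t_*\in(0,T)$, $\|u(t_*;u^0)-u_*\|_{E_\nu}\le C\|u^0-u_*\|_{E_\alpha}$.
\item Restarting at $t_*$ via uniqueness gives $t^+=\infty$ and the decay for $t\ge t_*$, using $E_\nu\hookrightarrow E_\xi\hookrightarrow E_\alpha$ and $\omega<\bar\omega$.
\item On $(0,t_*]$, the estimate \eqref{des:expstab2} follows from \eqref{F1} and the bound on $\|\hat u_*-u_*\|_{E_1}$.
\end{enumerate}
The conditions \eqref{HypoXX} and \eqref{Y} enter only through Proposition~\ref{FF}.
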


The proof of this result is given in Section~\ref{Sec:3} and an application  in Example~\ref{Ex3}.  
We note that assumption~\eqref{Y} is not needed in the non-critical case $ \alpha_{\rm crit}<\alpha\in (\beta,\xi)$ :

\begin{rem}\label{RRa1}
In the noncritical case $\alpha_{\rm crit}<\alpha\in (\beta,\xi)$ one may in fact omit assumption~\eqref{Y} in Theorem~\ref{MT2}.  
The claim still holds (with a similar proof) if estimate~\eqref{des:expstab2} is replaced by
\begin{equation*}
\|u(t;u^0)-\hat u_*\|_{E_\alpha}+t^{\mu}\|u(t;u^0)-\hat u_*\|_{E_\xi}\le M e^{-\omega t} \|u^0\|_{E_\alpha}, \qquad t> 0,
\end{equation*}
for some fixed  (but arbitrary) $\mu>\xi-\alpha$.  
\end{rem}

\section{Proof of Theorem~\ref{MT1}}\label{Sec:2}
 
In this section we provide the proof of  Theorem~\ref{MT1}.
We use the notation introduced earlier and adapt the proof of \cite[Theorem~2.1]{PSZ9a} to our setting.
Moreover, we use the fact that
 $A\in\mathcal{H}(F_1,F_0)$ for some densely embedded Banach spaces $F_1\hookrightarrow F_0$ if and only if there exist constants $\kappa\geq 1$ and $\omega>0$ such  that $A\in\mathcal{H}(F_1,F_0;\kappa,\omega)$, meaning that  
 \[
 \omega-A\in\mathcal{L}is(F_1,F_0):=\{T\in\mathcal{L}(F_1,F_0)\,:\, \text{$T$ is bijective}\}
 \]
 and
 \[
\|(\lambda-A)x\|_{F_0}\geq |\lambda|\, \|x\|_{F_0}+\|x\|_{F_1},\qquad x\in F_1,\ {\rm Re}\ \lambda\geq \omega.
 \]
We refer to \cite{LQPP} for details.

\medskip

\begin{proof}[Proof of Theorem~\ref{MT1}]  We divide the lengthy proof into four steps.\medskip

\noindent{\bf (i)}  We first represent the manifold $\mathcal{E}$  in a neighborhood of $u_*$ as a translated graph. 
To this end, define $h:U\to E_1^0$ by 
$h(\xi):=P\psi(\xi)$ for $\xi\in U$ and note that
$$\partial h(0)=P\partial\psi(0)\in \mathcal{L}is(\R^m,E_1^0)$$ 
due to~\eqref{E},  \eqref{assB}, and~\eqref{Astable1'}. The inverse function theorem then ensures that the mapping~$h$ is a ${\rm C}^{2}$-diffeomorphism in a neighborhood of $0$ and hence invertible. 
Thus, there exists $r_0>0$ such that
$$
h^{-1}\in {\rm C}^2\big(\mathbb{B}_{E_1^0}(0,r_0),U\big), \qquad h^{-1}(0)=0.
$$ 
Set $\Phi(x):=\psi\big(h^{-1}(x)\big)$ for $x\in \mathbb{B}_{E_1^0}(0,r_0)$. Then $\Phi\in {\rm C}^{2}\big(\mathbb{B}_{E_1^0}(0,r_0),E_1\big)$ satisfies $\Phi(0)=0$ and, recalling~\eqref{assA},
$$
\big\{\Phi(x)+u_*\,:\, x\in \mathbb{B}_{E_1^0}(0,r_0)\}=\mathcal{E}\cap W
$$
for some neighborhood $W$ of $u_*$ in $O_1$. Since 
$$
P\Phi(x)= P\psi\big(h^{-1}(x)\big)=x, \qquad x\in \mathbb{B}_{E_1^0}(0,r_0),
$$
by definition of $h$, we have
$$
\Phi(x)= P\Phi(x)+Q\Phi(x)=x+Q\Phi(x), \qquad x\in \mathbb{B}_{E_1^0}(0,r_0).
$$
Consequently, defining  $\phi(x):=Q\Phi(x)$ for $x\in \mathbb{B}_{E_1^0}(0,r_0)$, we obtain $\phi\in {\rm C}^{2}\big(\mathbb{B}_{E_1^0}(0,r_0),E_1^s\big)$ and deduce from   $\mathrm{rg}(P)=\mathrm{rg}(\partial\psi(0))$
that
\begin{subequations}\label{phi}
\begin{equation}\label{phi1}
 \phi(0)=0, \qquad \partial\phi(0)=Q\partial\psi(0)\partial h^{-1}(0)=0,
\end{equation}
and
\begin{equation}\label{phi2}
 \left\{x+\phi(x)+u_*\,:\, x\in \mathbb{B}_{E_1^0}(0,r_0)\right\}=\mathcal{E}\cap W
\end{equation}
\end{subequations}
for some neighborhood $W$ of $u_*$ in $O_1$. 
Therefore, the manifold $\mathcal{E}$ can indeed be represented in a neighborhood of $u_*$ as the graph of the function $\phi$ translated by $u_*$. 

Making $r_0>0$ smaller, if necessary, we may assume due to~\eqref{phi1} that
\begin{equation}\label{n9}
\|\partial \phi(x)\|_{\mathcal{L}(E_1^0,E_1^s)}\le 1   \quad\text{and}\quad \|\phi(x)\|_{E_1^s}\le \|x\|_{E_1^0}, \qquad  x\in \mathbb{B}_{E_1^0}(0,r_0),
\end{equation}
in particular, $\phi$ is uniformly Lipschitz continuous, and
\begin{equation}\label{n9b}
x+ \phi(x) +y\in  O_\beta^*, \qquad x\in \mathbb{B}_{E_1^0}(0,r_0),\ y\in \mathbb{B}_{E_\beta^s}(0,r_0).
\end{equation}
We also note from \eqref{equil} and the identities $\psi(h^{-1}(x)) =\Phi(x)=x+\phi(x)$  that
\begin{align}\label{n8}
A_*\big(x+\phi(x)\big)\big(x+\phi(x)\big)+g\big(x+\phi(x)\big)=0, \qquad x\in \mathbb{B}_{E_1^0}(0,r_0).
\end{align}

\noindent{\bf (ii)}
Next, we derive the corresponding Cauchy problem for the projections of the translated solution $u - u_*$ onto $E_1^0$ and  $E_1^s$, respectively. 
This Cauchy problem (see~\eqref{xeqs}-\eqref{yeqs} below) will play a crucial role in the subsequent analysis.

More precisely, let $u^0 \in O_\alpha$ be such that $P(u^0 - u_*) \in \mathbb{B}_{E_1^0}(0,r_0)$.
Let $u = u(\cdot;u^0)$ be the maximal solution to \eqref{CP} provided by Theorem~\ref{T1} and set $v := u - u_*$. 
Then there exists $T \in (0,t^+(u^0))$ sufficiently small, such that
$Pv(t) \in \mathbb{B}_{E_1^0}(0,r_0)$ for all  $t \in [0,T).$
 Hence, 
\begin{equation}\label{defxy}
\begin{aligned}
x(t)&:=Pv(t)=P(u(t)-u_*) \in \mathbb{B}_{E_1^0}(0,r_0),\\
y(t)&:=Q(v(t))-\phi(x(t))=Q(u(t)-u_*)-\phi(x(t))
\end{aligned}
\end{equation}
are well-defined for $ t\in[0,T)$ and satisfy
\[
x(t)+\phi(x(t))+y(t)=u(t)-u_*=v(t)\in O_\beta^*,\qquad t\in[0,T).
\]
 Moreover, since $\phi\in {\rm C}^{2}\big(\mathbb{B}_{E_1^0}(0,r_0),E_1^s\big)$ is uniformly Lipschitz continuous, we  infer from~\eqref{regu} that 
\begin{equation}\label{regxy}
\begin{aligned} 
&x\in {\rm C}^1\big((0,T),E_1^0\big)\cap  {\rm C}^{\alpha-\theta} \big([0,T),E_1^0\big),\\
&y\in {\rm C}^1\big((0,T),E_0^s\big)\cap {\rm C}\big((0,T),E_1^s\big)\cap  {\rm C}^{\alpha-\theta}\big([0,T),E_\theta^s\big)
\end{aligned}
\end{equation}
for $\theta\in[0,\alpha]$, and, by~\eqref{CPv}, the pair $(x,y)$ solves the evolution problem
\begin{align*}
x'&=S(x,y), \quad t>0,   && x(0)=x^0:= P(u^0-u_*),\\
y'&=R(x,y), \quad   t>0, && y(0)=y^0:=Q(u^0-u_*)-\phi(P(u^0-u_*)),
\end{align*}
in $E_1^0\times E_0^s$, where
\begin{align*}
S(x,y)&:=PA_*\big(x+\phi(x)+y\big)\big(x+\phi(x)+y\big)+Pg\big(x+\phi(x)+y\big), \\\
R(x,y)&:=QA_*\big(x+\phi(x)+y\big)\big(x+\phi(x)+y\big)+Qg\big(x+\phi(x)+y\big)-\partial\phi(x) S(x,y).
\end{align*}
To unfold the structure of this system, we  slightly abuse notation and simply write $(x,y)$ neglecting the time variable when appropriate. 
We first use identity~\eqref{n8} to write
\begin{align*}
S(x,y)&=P\big[A_*\big(x+\phi(x)+y\big)-A_*\big(x+\phi(x)\big)\big]\big(x+\phi(x)\big)\\
&\quad+P\big[g\big(x+\phi(x)+y\big)-g\big(x+\phi(x)\big)\big]+P A_*\big(x+\phi(x)+y\big)y.
\end{align*}
Noticing then that $A_*(v)=A(u_*+v)-A(u_*)+A_*(0)$ for $v\in O_\beta^*$ together with \eqref{Astable2}-\eqref{Astable3} implies that
$$
PA_*(v)(x+z)=P\big[A(u_*+v)-A(u_*)\big](x+z), \qquad v\in O_\beta^*,\ x\in E_1^0, \ z\in E_1^s,
$$
and recalling that $\phi(x), y\in E_1^s$, we derive that 
\begin{subequations}\label{S}
\begin{align}\label{decs}
S(x,y)=S_1(x,y)+S_2(x,y),
\end{align}
where
\begin{equation}
\begin{split}
S_1(x,y)&:=P\big[A(u_*+x+\phi(x)+y)-A(u_*+x+\phi(x))\big](x+\phi(x))\\
&\qquad+P\big[g(x+\phi(x)+y)-g(x+\phi(x))\big]
\end{split}
\end{equation}
for $x\in \mathbb{B}_{E_1^0}(0,r_0)$ and  $y\in \mathbb{B}_{E_\beta^s}(0,r_0)$, and
\begin{align}
S_2(x,y)&:=P\big[A(u_*+x+\phi(x)+y)-A(u_*)\big]y
\end{align}
for $x\in \mathbb{B}_{E_1^0}(0,r_0)$ and  $y\in E_1^s\cap\mathbb{B}_{E_\beta^s}(0,r_0)$.
\end{subequations}

Similarly, we deduce from~\eqref{n8} first that
\begin{align*}
R(x,y)&=Q A_*(x+\phi(x)+y)y+Q\big[A_*(x+\phi(x)+y)-A_*(x+\phi(x)\big](x+\phi(x))\\
&\quad+Q\big[g(x+\phi(x)+y)-g(x+\phi(x))\big]-\partial\phi(x) S(x,y)
\end{align*}
and then, using 
$$
QA_*(v)(x+z)=Q\big[A(u_*+v)-A(u_*)\big](x+z)+A_*^s(0)z, \qquad v\in O_\beta^*,\ x\in E_1^0, \ z\in E_1^s,
$$
 that
\begin{align*}
R(x,y)&=A_*^s(0)y+Q\big[A(u_*+x+\phi(x)+y)-A(u_*)\big]y\\
&\quad+Q\big[A(u_*+x+\phi(x)+y)-A(u_*+x+\phi(x))\big](x+\phi(x))\\
&\quad+Q\big[g(x+\phi(x)+y)-g(x+\phi(x))\big]-\partial\phi(x) S(x,y).
\end{align*}
Consequently, we obtain from the representation of $S(x,y)$ in~\eqref{S} that
\begin{align*}
R(x,y)&=\mathcal{A}(x,y)y+q(x,y)
\end{align*}
for $x\in \mathbb{B}_{E_1^0}(0,r_0)$ and  $y\in E_1^s\cap\mathbb{B}_{E_\beta^s}(0,r_0)$,
where we set
\begin{equation}\label{mcA}
\begin{split}
\mathcal{A}(x,y)&:=A_*^s(0)+Q\big[A(u_*+x+\phi(x)+y)-A(u_*)\big]\\
&\qquad -\partial\phi(x)P\big[A(u_*+x+\phi(x)+y)-A(u_*)\big]
\end{split}
\end{equation}
and
\begin{equation}\label{q}
\begin{split}
q(x,y)&:=Q\big[A(u_*+x+\phi(x)+y)-A(u_*+x+\phi(x))\big](x+\phi(x))\\
&\qquad+Q\big[g(x+\phi(x)+y)-g(x+\phi(x))\big]\\
&\qquad -\partial\phi(x) P\big[A(u_*+x+\phi(x)+y)-A(u_*+x+\phi(x))\big](x+\phi(x))\\
&\qquad -\partial\phi(x)P\big[g(x+\phi(x)+y)-g(x+\phi(x))\big].
\end{split}
\end{equation}
Summarizing,  the pair $(x,y)$ solves the Cauchy problem 
\begin{align}
x'&=S(x,y), \quad t>0,   && x(0)=x^0:= P(u^0-u_*),\label{xeqs}\\
y'&=\mathcal{A}(x,y)y+q(x,y), \quad   t>0, && y(0)=y^0:=Q(u^0-u_*)-\phi(P(u^0-u_*)),\label{yeqs}
\end{align}
in $E_1^0\times E_0^s$, where
\begin{align*}
&S: \mathbb{B}_{E_1^0}(0,r_0)\times\big(E_1^s\cap\mathbb{B}_{E_\beta^s}(0,r_0)\big)  \to  E_1^0,\\
&\mathcal{A}: \mathbb{B}_{E_1^0}(0,r_0)\times\mathbb{B}_{E_\beta^s}(0,r_0)  \to  \mathcal{L}(E_1^s,E_0^s)
\end{align*} 
 are defined in~\eqref{S} and~\eqref{mcA}, respectively, while 
\begin{align*}
&q: \mathbb{B}_{E_1^0}(0,r_0)\times\mathbb{B}_{E_\beta^s}(0,r_0)  \to  E_0^s
\end{align*}
 is defined in~\eqref{q}. Obviously, the functions $S$, $\mathcal{A}$, and $q$ are continuously differentiable.
 
 In fact, it readily follows from~\eqref{as2A}, \eqref{g}, and~\eqref{n9}
that, for any $\eta\in (0,r_0)$ (to be chosen  below, see step {\bf (iv)}), there exists $r(\eta)\in (0,\eta)$ such that
\begin{equation}\label{e2}
\|q(x,y)\|_{E_0^s}\le \eta \|y\|_{E_\beta^s}, \qquad x\in \bar{\mathbb{B}}_{E_1^0}(0,r(\eta)), \ y\in \bar{\mathbb{B}}_{E_\beta^s}(0,r(\eta)),
\end{equation}
and
\begin{equation} \label{e1x}
\|S_1(x,y)\|_{E_1^0}\le \eta \|y\|_{E_\beta^s}, \qquad x\in \bar{\mathbb{B}}_{E_1^0}(0,r(\eta)), \ y\in \bar{\mathbb{B}}_{E_\beta^s}(0,r(\eta)),
\end{equation}
where we recall that all norms on $E_1^0$ are equivalent. 
Concerning $S_2$, we invoke assumption~\eqref{assC} to deduce that there exists a constant $C>0$ such that
\begin{align*}
\|S_2(x,y)\|_{E_1^0}&\le C\|S_2(x,y)\|_{P(F)} \leq C \|A(u_*+x+\phi(x)+y)-A(u_*)\|_{\mathcal{L}(G,F)} \|y\|_{G}\\
&\le \eta \|y\|_{E_\zeta^s}
\end{align*}
for  $x\in \bar{\mathbb{B}}_{E_1^0}(0,r(\eta))$ and $y\in E_\zeta^s\cap\bar{\mathbb{B}}_{E_\beta^s}(0,r(\eta))$.
In fact, since $E_\zeta^s\hookrightarrow E_\beta^s$, we may thus assume in view of \eqref{decs} and~\eqref{e1x}  that
\begin{align}\label{e1}
\|S(x,y)\|_{E_1^0}\le \eta \|y\|_{E_\zeta^s}, \qquad x\in \bar{\mathbb{B}}_{E_1^0}(0,r(\eta)), \ y\in E_\zeta^s\cap\bar{\mathbb{B}}_{E_\beta^s}(0,r(\eta)).
\end{align}
 Finally, recalling~\eqref{mcA} and making $r(\eta)\in(0,\eta)$ smaller, if necessary, we infer from~\eqref{as2A} and~\eqref{n9} that
\begin{equation}\label{AA}
\|\mathcal{A}(x,y)-A_*^s(0)\|_{\mathcal{L}(E_1^s,E_0^s)}\le \eta , \qquad x\in \bar{\mathbb{B}}_{E_1^0}(0,r(\eta)), \ y\in \bar{\mathbb{B}}_{E_\beta^s}(0,r(\eta)).
\end{equation}

To simplify the notation, we set
$$
\bar{\mathbb{B}}_\beta(r(\eta)):=\bar{\mathbb{B}}_{E_1^0}(0,r(\eta))\times \bar{\mathbb{B}}_{E_\beta^s}(0,r(\eta))
$$
for $\eta \in (0,r_0)$.
\medskip

\noindent{\bf (iii)} 
Fix $\bar\alpha \in ( \max\{\beta,\alpha+\zeta-1\},\alpha)$  and define $\rho := \alpha - \bar\alpha \in (0, \min\{\alpha-\beta,1-\zeta\})$.  
For any~$T \in (0,\infty]$, we introduce the set
\begin{align*}
\mathcal{M}_\eta(T)
:= \Big\{ (x,y) & \in  \mathrm{C}\big([0,T), \bar{\mathbb{B}}_\beta(r(\eta))\big) \,:\, \\
& \|x(t)-x(s)\|_{E_1^0} + \|y(t)-y(s)\|_{E_\beta^s}
\le N |t-s|^\rho, \quad t,s \in  [0,T)\Big\},
\end{align*}
where $N>0$ is to be chosen suitably (see \eqref{defN} below). Given $(x,y)\in \mathcal{M}_\eta(T)$, we associate with  $\mathcal{A}(x,y)$ the unique parabolic evolution operator
$U_{\mathcal{A}(x,y)}$ on $E_0^s$ with regularity subspace $E_1^s$  and provide some estimates that will be important for our purpose.

To begin with,  fix  $\omega\in (0,\omega_0)$  and set $4\ve:=\omega_0-\omega>0$. Invoking \cite[II.Proposition~1.4.2]{LQPP}, we infer from~\eqref{Astable3} and~\eqref{AA} (making $r(\eta)>0$ smaller, if necessary) that there exist constants~$\kappa\ge 1$ and $L\ge 1$ such that 
$$
\omega_0-\ve +\mathcal{A}(x,y) \in \mathcal{H}(E_1^s,E_0^s;\kappa,\ve), \qquad (x,y)\in \bar{\mathbb{B}}_\beta(r(\eta)),
$$
and, recalling~\eqref{n9} and the fact that $\phi\in {\rm C}^{2}\big(\mathbb{B}_{E_1^0}(0,r_0),E_1^s\big)$\footnote{\label{foot}At this point we use the Lipschitz continuity of~$\partial\phi$ and, implicitly,  the assumption that the equilibria form a ${\rm C}^2$-manifold. In the semilinear case, $\mathcal{A}$ is independent of $(x,y)$ and we obviously do not need the Lipschitz continuity of $\partial\phi$ so that a ${\rm C}^1$-manifold is sufficient.},
$$
\|\mathcal{A}(x,y)-\mathcal{A}(\bar x,\bar y)\|_{\mathcal{L}(E_1^s,E_0^s)}\le L \big(\|x-\bar x||_{E_1^0}+\|y-\bar y||_{E_\beta^s}\big),  \qquad (x,y), (\bar x,\bar y)\in \bar{\mathbb{B}}_\beta(r(\eta)).
$$
These properties  ensure, for each $(x,y)\in \mathcal{M}_\eta(T)$, that 
\begin{subequations}\label{xx}
\begin{equation}
\mathcal{A}(x,y)\in {\rm C}^\rho\big([0,T),\mathcal{L}(E_1^s,E_0^s)\big)
\end{equation}
with
\begin{equation}
\| \mathcal{A}(x(t),y(t))-\mathcal{A}(x(s),y(s))\|_{\mathcal{L}(E_1^s,E_0^s)}\le LN\vert t-s\vert^\rho, \qquad t,s\in  [0,T),
\end{equation}
and
\begin{equation} 
\omega_0-\ve +\mathcal{A}(x(t),y(t)) \in \mathcal{H}(E_1^s,E_0^s;\kappa,\ve), \qquad t\in  [0,T).
\end{equation}
\end{subequations}

Properties~\eqref{xx} allow us to invoke the stability estimates of \cite[Section~II.5]{LQPP} for the family
$\{\mathcal{A}(x,y)\,:\, (x,y)\in \mathcal{M}_\eta(T)\}$.
Let $c_0(\rho)>0$ be the constant from \cite[II.Theorem~5.1.1]{LQPP} and choose $N>0$ such that 
\begin{equation}\label{defN}
c_0(\rho)(LN)^{1/\rho}= \ve.
\end{equation} 
Note that  
\begin{equation}\label{nu}
-\nu:= c_0(\rho)(LN)^{1/\rho}-\omega_0+2\ve= -\omega_0+3\ve=-\omega-\ve <0.
\end{equation}
In  then follows from \cite[II.Theorem~5.1.1, II.Lemma~5.1.3]{LQPP},~\eqref{xx}, and~\eqref{nu} that, for each $(x,y)\in \mathcal{M}_\eta(T)$, there exists a unique parabolic evolution operator $U_{\mathcal{A}(x,y)}$ on $E_0^s$ with regularity subspace $E_1^s$. Moreover, there exists a constant $M\ge 1$ (depending on the constants $LN$, $\rho$, $\kappa$, $\omega_0$, and~$\ve$, but independent of $T\in(0,\infty]$) such that, 
for $ 0\le s\le t< T$,
\begin{equation}\label{ev0} 
\|U_{\mathcal{A}(x,y)}(t,s)\|_{\mathcal{L}(E_\theta^s)}+ (t-s)^{\theta-\vartheta_0}\|U_{\mathcal{A}(x,y)}(t,s)\|_{\mathcal{L}(E_\vartheta^s,E_\theta^s)}  \leq  Me^{-(\omega+\ve) (t-s)},
\end{equation}
provided that $0\le \vartheta_0\le \vartheta\le  \theta\le 1$ with $\vartheta_0<\vartheta$ if $0<\vartheta<  \theta< 1$. Moreover, it follows from
\cite[II.Theorem~5.3.1]{LQPP}  that
$$
w:=\left[t\mapsto U_{\mathcal{A}(x,y)}(t,0)w^0+\int_0^t U_{\mathcal{A}(x,y)}(t,\tau) p(\tau)\, \rd\tau\right]\in {\rm C}^{\rho}( [0,T),E_{\bar\alpha}^s)
$$
and (making $M\ge 1$ larger, if necessary)
\begin{equation}\label{ww}
\|w(t)-w(\tau)\|_{E_{\bar\alpha}^s} \le M(t-\tau)^{\rho}\big(\|w^0\|_{E_{\alpha}^s}+\|p\|_{L_\infty((0,t),E_0^s)}\big), \qquad 0\le \tau\le  t< T,
\end{equation}
whenever $w^0\in E_{\alpha}^s$ and $p\in L_{\infty,{\rm loc}}([0,T),E_0^s)$, where we recall that $\rho=\alpha-\bar\alpha$.
\medskip

\noindent{\bf (iv)}  Finally, we combine the reformulation \eqref{xeqs}–\eqref{yeqs} of \eqref{CP} with the stability estimates \eqref{ev0}–\eqref{ww} to finish the proof of Theorem~\ref{MT1}.

Let therefore $R>0$ be such that $\bar{\mathbb{B}}_{E_\alpha}(u_*,R)\subset O_\alpha$ and choose  $\eta\in (0,\min\{r_0,R/ (2 c_{1,\alpha})\})$ and $\delta_0\in (0,1)$ small enough such that 
\begin{subequations}
\label{choice}
\begin{align}
& c_{\alpha,\beta}M\eta \int_0^\infty \big(1+\sigma^{-\alpha}+\sigma^{-\zeta} \big)e^{-\ve \sigma/2} \,\rd \sigma\le \frac{1}{2},\qquad \delta_0\le \frac{r(\eta)}{4M} ,\label{choice1}\\
& c_{\bar \alpha,\beta}M\big(1+2M\eta\big)\delta_0 \le\frac{N}{4}, \qquad \frac{M \eta \delta_0}{\rho} \left(\sup_{\sigma>0} \big(\sigma^{1-\rho}+\sigma^{1-\rho-\zeta}\big)e^{-(\omega  +\ve/2) \sigma}\right)\le\frac{N}{4},\label{choice2}
\end{align}
\end{subequations}
where $r(\eta)\in (0,\eta)$ is as in {\bf (ii)}, and where $c_{\vartheta,\theta}>0$ denotes the norm of the continuous embedding $E_\vartheta^s \hookrightarrow E_\theta^s$ for $0\le \theta\le\vartheta\le 1$ (without loss of generality we assume that~${c_{\vartheta,\theta}\ge 1}$).
Consider then $u^0\in \bar{\mathbb{B}}_{E_\alpha}(u_*,\delta)\subset O_\alpha$ with $\delta\in (0,R)$ small enough such that (recall that all norms on $E_1^0$ are equivalent)
\begin{align}\label{x}
\|x^0\|_{E_1^0}=\|P(u^0-u_*)\|_{E_1^0}\le \|P\|_{\mathcal{L}(E_\alpha,E_1^0)}\|u^0-u_*\|_{E_\alpha}\le \delta_0   <r_0,
\end{align}
where the last inequality  follows from $0<r(\eta)<\eta<r_0$  and \eqref{choice1}, and such that (using~\eqref{n9})
\begin{equation}\label{y} 
\begin{split}
\|y^0\|_{E_\beta^s}&\le c_{\alpha,\beta}\|y^0\|_{E_\alpha^s}=c_{\alpha,\beta}\|Q(u^0-u_*)-\phi(P(u^0-u_*))\|_{E_\alpha^s}\\
&\le c_{\alpha,\beta}\|Q\|_{\mathcal{L}(E_\alpha,E_\alpha^s)}\|u^0-u_*\|_{E_\alpha}+c_{\alpha,\beta}c_{1,\alpha}\|P(u^0-u_*)\|_{E_1^0}
\le \delta_0.
\end{split}
\end{equation}
 Recall that $u = u(\cdot;u^0)$ denotes the maximal solution to problem~\eqref{CP}  with maximal existence time~$t^+=t^+(u^0)$ provided by Theorem~\ref{T1} and that $(x,y)$ is introduced in~\eqref{defxy} satisfying~\eqref{regxy}. Due to \eqref{choice1}, \eqref{x}, and~\eqref{y} we can choose $T\in (0,t^+)$ sufficiently small such that
$$
\|x(t)\|_{E_1^0}\le 2\delta_0\le r(\eta) <r_0 ,\qquad \|y(t)\|_{E_\beta^s}\le c_{\alpha,\beta}\|y(t)\|_{E_\alpha^s}\le 2\delta_0\le r(\eta)
$$  
for $t\in  [0,T)$ and, in view of~\eqref{regxy},  we may further assume that there is a constant~${C>0}$ such that 
\begin{align*}
\|x(t)-x(\tau)\|_{E_1^0}+\|y(t)-y(\tau)\|_{E_\beta^s}\le  C T^{ \bar \alpha -\beta}\vert t-\tau\vert^\rho\le N \vert t-\tau\vert^\rho
\end{align*}
for $t,\tau\in  [0,T)$. Define
\begin{align*}
T_0 := \sup \left\{ T \in (0,t^+) \,:\,
\begin{minipage}{0.65\linewidth}
$\|x(t)\|_{E_1^0} \le r(\eta) <r_0, \
\|y(t)\|_{ E_\beta^s} \le c_{\alpha,\beta}\|y(t)\|_{E_\alpha^s} \le r(\eta)$,\\[1ex]
$\|x(t)-x(\tau)\|_{E_1^0}
 + \|y(t)-y(\tau)\|_{E_\beta^s}
 \le N |t-\tau|^\rho,$ \ $ t,\tau \in [0,T)$
\end{minipage}
\right\}.
\end{align*}
Then,  $T_0\in (0,t^+]$ and it follows from~\cite[II.Remarks~2.1.2]{LQPP}, \eqref{regxy}, \eqref{yeqs},   and~{\bf (iii)} that $y$ is given by the variation-of-constants formula
$$
y(t)=U_{\mathcal{A}(x,y)}(t,0) y^0+\int_0^t U_{\mathcal{A}(x,y)}(t,\tau) q\big(x(\tau),y(\tau)\big)\,\rd \tau, \qquad 0\le t<T_0.
$$
 For $\theta\in \{\alpha,\zeta\}$, let $\alpha_0=\alpha_0(\theta)$ be defined by $\alpha_0:=\alpha$ if $\theta=\alpha$ and  $\alpha_0:=0$ if $\theta=\zeta$ (recall that $\alpha<\zeta$). 
 Then, in virtue of~\eqref{e2} and~\eqref{ev0}, we have
\begin{align*}
e^{\omega t}\|y(t)\|_{E_\theta^s}&\le e^{\omega t}\|U_{\mathcal{A}(x,y)}(t,0)\|_{\mathcal{L}(E_\alpha^s,E_\theta^s)}\| y^0\|_{E_\alpha^s}\\
&\quad +e^{\omega t}\int_0^t \|U_{\mathcal{A}(x,y)}(t,\tau)\|_{\mathcal{L}(E_0^s,E_\theta^s)} \left\|q\big(x(\tau),y(\tau)\big)\right\|_{E_0^s}\,\rd \tau \\
&\le M t^{\alpha_0-\theta}e^{-\ve t} \| y^0\|_{E_\alpha^s} + M \eta  \int_0^t e^{-\ve(t-\tau)}(t-\tau)^{-\theta} e^{\omega\tau}\|y(\tau)\|_{E_\beta^s}\,\rd \tau
\end{align*}
for $t\in (0,T_0)$. On the one hand, for $\theta=\alpha$ we deduce  that
\begin{align*}
e^{(\omega +\ve/2)t}\|y(t)\|_{E_\alpha^s}\le M  \| y^0\|_{E_\alpha^s} + c_{\alpha,\beta}M \eta  \Big(\int_0^\infty \sigma^{-\alpha} e^{-\ve\sigma/2}\,\rd \sigma \Big)\sup_{\tau\in[0,t]}e^{(\omega+\ve/2)\tau}\|y(\tau)\|_{E_\alpha^s}
\end{align*}
for $t\in [0,T_0)$,  so that \eqref{choice1} then ensures
\begin{align}\label{yy}
 \|y(t)\|_{E_\alpha^s}\le  2 M e^{-(\omega+ \ve/2) t} \| y^0\|_{E_\alpha^s}, \qquad t\in [0,T_0).
\end{align}
Consequently, using again \eqref{choice1}  and~\eqref{y}, we get
\begin{align}\label{yyy}
\|y(t)\|_{E_\beta^s} &\le c_{\alpha,\beta} \|y(t)\|_{E_\alpha^s}\le  2 M c_{\alpha,\beta} \| y^0\|_{E_\alpha^s}\le 2M\delta_0\le \frac{r(\eta)}{2} ,\qquad t\in [0,T_0).
\end{align}
On the other hand, taking $\theta=\zeta$ and using \eqref{yy}, we  obtain
\begin{align*}
e^{(\omega+ \ve/2) t}\|y(t)\|_{E_{\zeta}^s}&\le \Big[M t^{-\zeta}  + 2 c_{\alpha,\beta} M^2 \eta    \Big(\int_0^\infty \sigma^{-\zeta} e^{-\ve \sigma/2}\,\rd \sigma\Big)\Big]\| y^0\|_{E_\alpha^s}
\end{align*}
and hence
\begin{align}\label{yyyy}
\|y(t)\|_{E_{\zeta}^s}&\le M\big(1+t^{-\zeta}\big)e^{-(\omega+ \ve/2) t} \| y^0\|_{E_\alpha^s} ,\qquad t\in (0,T_0),
\end{align}
due to \eqref{choice1}. Next, note from~\eqref{ww},  exploiting also~\eqref{e2},  \eqref{y}, and~\eqref{yyy}, that
\begin{align*}
\|y(t)-y(\tau)\|_{E_{\bar\alpha}^s} & \le M\big(\|y^0\|_{E_{\alpha}^s}+\|q(x,y)\|_{L_\infty((0,t),E_0^s)}\big)(t-\tau)^{\rho}\\
&\le  M\big(\|y^0\|_{E_{\alpha}^s}+\eta\|y\|_{L_\infty((0,t),E_\beta^s)}\big)(t-\tau)^{\rho}\\
&\le  M\big(1+2M\eta\big)\delta_0 (t-\tau)^{\rho}
\end{align*}
for $0\le \tau< t< T_0$, so that \eqref{choice2} implies that
\begin{align}\label{y5}
\|y(t)-y(\tau)\|_{E_{\beta}^s}\le c_{\bar\alpha,\beta}\|y(t)-y(\tau)\|_{E_{\bar\alpha}^s} \le \frac{N}{4} (t-\tau)^{\rho}, \qquad 0\le \tau< t< T_0.
\end{align}
We  now turn to equation~\eqref{xeqs} satisfied by $x$  and infer from \eqref{regxy},~\eqref{e1}, and~\eqref{yyyy}  that
\begin{equation}\label{xxx}
\begin{aligned}
\|x(t)-x(\tau)\|_{E_1^0}&\le \int_\tau^t \|S(x(\sigma),y(\sigma))\|_{E_1^0}\,\rd \sigma \le \eta\int_\tau^t \|y(\sigma)\|_{E_\zeta^s}\,\rd \sigma \\
&\le   M\eta \| y^0\|_{E_\alpha^s} \int_\tau^t \big(1+\sigma^{-\zeta}\big)e^{-(\omega +\ve/2)\sigma} \,\rd \sigma
\end{aligned}
\end{equation}
for $0\le \tau< t< T_0$. On the one hand,  we obtain from~\eqref{y} and~\eqref{xxx} that
\begin{align*}
\|x(t)-x(\tau)\|_{E_1^0}
&\le  M\eta  \| y^0\|_{E_\alpha^s}\left(\sup_{\sigma>0}\big(\sigma^{1-\rho}+\sigma^{1-\rho-\zeta}\big)e^{-(\omega +\ve/2) \sigma}\right)\int_\tau^t \sigma^{\rho-1} \,\rd \sigma\\
&\le \frac{ M\eta \delta_0}{\rho} \left(\sup_{\sigma>0}\big(\sigma^{1-\rho}+\sigma^{1-\rho-\zeta}\big)e^{-(\omega +\ve/2) \sigma}\right) \big(t^\rho-\tau^\rho\big)
\end{align*}
and hence, using~\eqref{choice2},  
\begin{align}\label{y6}
\|x(t)-x(\tau)\|_{E_1^0}&\le \frac{N}{4} (t-\tau)^\rho, \qquad 0\le \tau< t< T_0.
\end{align}
On the other hand, we deduce from~\eqref{xxx} combined with~\eqref{choice1}, \eqref{x}, and~ \eqref{y},  that
\begin{equation*}
\|x(t)\|_{E_1^0}
\le \|x^0\|_{E_1^0}+  M \eta\| y^0\|_{E_\alpha^s} \int_0^\infty \big(1+\sigma^{-\zeta}\big)e^{-(\omega+\ve/2) \sigma} \,\rd \sigma
\end{equation*}
and thus
\begin{equation}\label{cv}
\|x(t)\|_{E_1^0}\le \|x^0\|_{E_1^0}+\| y^0\|_{E_\alpha^s}\le 2\delta_0\le \frac{r(\eta)}{2}, \qquad t\in [0,T_0).
\end{equation}
Consequently, we infer from~\eqref{yyy}, \eqref{y5}, \eqref{y6}, and~\eqref{cv} that $T_0=t^+$. In particular, all the previous estimates are valid on the interval $(0,t^+)$. Since 
$$
v(t)=u(t)-u_*=x(t)+\phi(x(t))+y(t),\qquad t\in [0,t^+),
$$
they imply  (see also \eqref{n9}) in fact that 
\begin{align}\label{stable4}
u\big([0,t^+)\big)\subset \bar{\mathbb{B}}_{E_\alpha}(u_*, 2 c_{1,\alpha}r(\eta))\subset \bar{\mathbb{B}}_{E_\alpha}( u_*,R)\subset O_\alpha
\end{align}
and
\begin{align*}
u\in {\rm BUC}^\rho([0,t^+),E_{\bar\alpha}).
\end{align*}
Thus, the orbit $u\big([0,t^+))\big)$ is relatively compact in $O_{\bar\alpha}$.
 Invoking  Theorem~\ref{T1}, we conclude that $T_0=t^+=\infty$ 
(as the solutions in $E_{\alpha}$ and $E_{\bar\alpha}$ coincide). 

We note that the argument used when deriving~\eqref{cv} ensures that
\begin{equation}\label{xstar}
\hat x_*:=\lim_{t\to\infty} x(t)=x^0+\int_0^\infty S(x(\sigma),y(\sigma))\,\rd\sigma
\end{equation}
exists in $E_1^0$, since the integral is absolutely convergent. In fact, $\hat x_*\in \mathbb{B}_{E_1^0}(0,r_0)$ and
\begin{equation}\label{25b}
\begin{aligned}
\|x(t)-\hat x_*\|_{E_1^0}&\le \int_t^\infty \|S(x(\sigma),y(\sigma))\|_{E_1^0}\,\rd \sigma 
\leq  M\eta \| y^0\|_{E_\alpha^s} \int_ t^\infty \big(1+\sigma^{-\zeta}\big)e^{-(\omega  +\ve/2)\sigma} \,\rd \sigma\\
&\le M\eta \| y^0\|_{E_\alpha^s}e^{-\omega t} \int_ 0^\infty \big( 1+\sigma^{-\zeta}\big)e^{- \ve\sigma/2} \,\rd \sigma\leq C\| y^0\|_{E_\alpha^s} e^{-\omega t} 
\end{aligned}
\end{equation}
 for $t\ge 0$ and some constant $C>0$,   proving exponential convergence of $x$. 

Additionally, since $\hat x_*\in \mathbb{B}_{E_1^0}(0,r_0)$, we deduce from \eqref{phi2} that
\begin{equation}\label{ustar}
\hat u_*:=u_*+\hat x_*+\phi(\hat x_*)\in \mathcal{E},
\end{equation}
and,  combining~\eqref{n9},~\eqref{yy}, and \eqref{25b}, we find a constant~${C>0}$ such that
\begin{align*}
\|u(t)-\hat u_*\|_{E_\alpha}&\le c_{1,\alpha}\|x(t) -\hat x_*\|_{E_1^0} +c_{1,\alpha}\|\phi(x(t))-\phi(\hat x_*)\|_{E_1^s}+\| y(t)\|_{E_\alpha^s}\\
&\le 2c_{1,\alpha}\|x(t) -\hat x_*\|_{E_1^0}+\| y(t)\|_{E_\alpha^s}\le C\| y^0\|_{E_\alpha^s} e^{-\omega t}
\end{align*}
 for $t\geq 0$.
Noticing that $\| y^0\|_{E_\alpha^s} \le c\|u^0-u_*\|_{E_\alpha}$
according to \eqref{x}-\eqref{y}, this proves~\eqref{des:expstab}.\medskip

Finally, regarding the stability assertion for $u_*$, we observe that, given any $r_1>0$, 
we can choose $\delta>0$ and $2 c_{1,\alpha}r(\eta)\in (0,r_1)$ such that the maximal solution $u(\cdot;u^0)$ with initial value  $u^0\in \bar{\mathbb{B}}_{E_\alpha}(u_*,\delta)$ exists globally and stays in $\bar{\mathbb{B}}_{E_\alpha}(u_*, 2 c_{1,\alpha}r(\eta))$ for all time, see~\eqref{stable4}. This proves Theorem~\ref{MT1}.
\end{proof}

For later use, we state the  following:

 \begin{rem}\label{R2} In view of  \eqref{n9}, \eqref{x},  \eqref{y}, and \eqref{xstar}-\eqref{ustar},  there is a constant~$C>0$ such that
 \begin{equation*}
 \|\hat u_*-u_*\|_{E_1}=\|\hat x_*+\phi(\hat x_*)\|_{E_1}\leq 2\|\hat x_*\|_{E_1^0}\leq C(\|x^0\|_{E_1^0}+\|y^0\|_{E_\alpha^s}) \leq C\|u^0-u_*\|_{E_\alpha}
 \end{equation*}
for $u^0\in \bar{\mathbb{B}}_{E_\alpha}(u_*,\delta)$.
In particular, the equilibrium $\hat u_*=\hat u_*(u^0)$ converges to $u_*$ in $E_1$ as~$u^0\to u_*$ in $E_\alpha$.
 \end{rem}

\section{Proof of Theorem~\ref{MT2}}\label{Sec:3}


A natural framework for studying \eqref{CP} under the assumptions~\eqref{Hypo}--\eqref{Y}  
is provided by time-weighted spaces, which effectively capture the (singular) behavior of  
solutions to~\eqref{CP} near $t=0$.
Let  $E$ be a Banach space,  $\mu\in\R$, and~${T>0}$. Then the time-weighted space~${\rm C}_\mu((0,T],E)$ is defined by
\begin{equation}\label{tws}
 {\rm C}_\mu\big((0,T],E\big):=\big\{u\in {\rm C}((0,T], E)\,:\, \text{$t^\mu  \|u(t)\|_E\to 0$ for $t\to0$}\big\},
 \end{equation}
being a Banach space with the norm
\[
\|u\|_{{\rm C}_\mu((0,T],E)}:=\sup_{t\in(0,T]}t^\mu\|u(t)\|_E. 
\]

The proof of Theorem~\ref{MT2} is based on Theorem~\ref{MT1}, whose application is facilitated by  the following preparatory result. It provides, in particular, uniform estimates for solutions that start in a sufficiently small ball centered at $u_*$. 

\begin{prop}\label{FF}
Assume~\eqref{Hypo}-\eqref{Y} and fix~$\nu\in(\xi,1)$ and~${\eta\in(0,\gamma)}$.
Then there exist constants~$r>0$ and $T>0$  such that $\bar{\mathbb{B}}_{E_\alpha}(u_*,r)\subset O_\alpha$ and  
for each $u^0\in\bar{\mathbb{B}}_{E_\alpha}(u_*,r)$  the Cauchy problem~\eqref{CP}  possesses a unique solution
\begin{equation*}
\begin{aligned}
u(\cdot;u^0)&\in  {\rm C}\big((0,T],E_1\big)\cap{\rm C}^1\big((0,T],E_0\big)\cap {\rm C}\big([0,T],O_\alpha\big)
 \cap  {\rm C}^{\alpha-\beta}\big([0,T],E_\beta\big)\\
 &\quad\cap {\rm C}_{\xi-\alpha}\big((0,T],E_\xi\big).
\end{aligned}
\end{equation*}
Moreover, there is a constant $C>0$ such that, for $u^0, u^1\in \bar{\mathbb{B}}_{E_\alpha }(u_*,r)$, it holds that
\begin{equation}\label{F1}
\begin{aligned}
&\|u(\cdot;u^0)-u(\cdot;  u^1)\|_{{\rm C}([0,T],E_\alpha)} + \|u(\cdot;u^0)-u(\cdot;  u^1)\|_{{\rm C}_{\xi-\alpha}((0,T],E_\xi)}\\
&+ \|u(\cdot;u^0)-u(\cdot;  u^1)\|_{{\rm C}_{\nu-\alpha+\eta}((0,T],E_\nu)}
 \leq  C\|u^0- u^1\|_\alpha.
\end{aligned}
\end{equation}
\end{prop}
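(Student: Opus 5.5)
The plan is to rerun the fixed-point construction behind Theorem~\ref{T2} (from \cite{MRW25,MW_PRSE}) and to check that every constant it produces can be made uniform when the initial value ranges over a small ball $\bar{\mathbb{B}}_{E_\alpha}(u_*,r)$. Since $u_*\in O_1\subset O_\xi$, the point $u_*$ lies in $E_\xi$, and the constant function $u_*$ is a natural reference solution. For $u^0$ near $u_*$ and $T>0$ small, I will work on the closed set
\[
\mathcal{V}_T:=\Big\{u\in {\rm C}([0,T],O_\alpha)\cap{\rm C}_{\xi-\alpha}\big((0,T],E_\xi\big)\,:\,\|u-u_*\|_{{\rm C}([0,T],E_\beta)}\le \varepsilon_0,\ \|u-u_*\|_{{\rm C}_{\xi-\alpha}((0,T],E_\xi)}\le \varepsilon_0,\ [u]_{{\rm C}^{\alpha-\beta}([0,T],E_\beta)}\le N\Big\}.
\]
Here I use the weighted norm of the difference $u-u_*$ rather than of $u$ itself, because $u_*$ has no weight.

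On $\mathcal{V}_T$ I define the map $\Lambda(u)(t):=U_{A(u)}(t,0)u^0+\int_0^tU_{A(u)}(t,\tau)f(u(\tau))\,\rd\tau$. As in step (iii) of the proof of Theorem~\ref{MT1}, the bound $\|A(u(t))-A(u_*)\|\le L\varepsilon_0$ together with the Hölder seminorm $N$ gives evolution operators with constants $M$ that depend only on $\varepsilon_0$, $N$ and $T\le1$. These are exactly the estimates \eqref{ev0}, applied now on the whole of $E_\theta$.

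The smallness in the weighted $E_\xi$-norm is the delicate point. I will write $\Lambda(u)-u_*=U_{A(u)}(t,0)(u^0-u_*)+[U_{A(u)}(t,0)u_*-u_*]+\int_0^t U_{A(u)}(t,\tau)f(u)\,\rd\tau$ and treat the three terms as follows.
\begin{itemize}
\item The first term is bounded by $Mt^{\alpha-\xi}r$.
\item For the second term I use $u_*=-A(u_*)^{-1}\cdots$, or more directly $U(t,0)u_*-u_*=\int_0^tU(t,\tau)A(u(\tau))u_*\,\rd\tau$, and note $A(u)u_*\in E_0$ is bounded. This gives a bound $Ct^{1-\xi}$, and $t^{\xi-\alpha}\cdot t^{1-\xi}=t^{1-\alpha}$ is small for small $T$.
\item The third term is controlled by \eqref{Hypof2} with $v=u_*$. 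This yields $\|f(u)\|_{E_\gamma}\le C(1+\tau^{(\alpha-\xi)q}\varepsilon_0^q)$, and the resulting Beta integrals produce the factor $T^{1+\gamma-\xi}$ plus $\varepsilon_0^{q}T^{(q\xi-1-\gamma-(q-1)\alpha)\cdot(-1)\ldots}$. In the critical case $\alpha=\alpha_{\rm crit}$ this exponent vanishes, and the term is made small through $\varepsilon_0^{q-1}\ll1$ rather than through $T$. This is where \eqref{Y} enters: I use the interpolation inequality $\|w\|_\gamma\le c\|w\|_0^{1-\gamma/\eta}\|w\|_\eta^{\gamma/\eta}$, as in \cite{MRW25}, to handle the $E_\gamma$ bounds and to show that the weighted limit in \eqref{tws} is attained.
\end{itemize}
The Hölder seminorm in $E_\beta$ and the $E_\alpha$-bound are obtained from \cite[II.Theorem~5.3.1]{LQPP}, in the same way as \eqref{ww}.

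For the contraction and Lipschitz estimates I use the standard difference formula $U_{A(u)}-U_{A(v)}=\int U_{A(u)}(A(u)-A(v))U_{A(v)}$, combined with \eqref{Hypof2} for $f(u)-f(v)$. This gives $\|\Lambda(u)-\Lambda(v)\|\le\tfrac12\|u-v\|+C\|u^0-u^1\|_\alpha$ in the norm ${\rm C}([0,T],E_\alpha)\cap{\rm C}_{\xi-\alpha}((0,T],E_\xi)$. Applying this to the fixed points yields \eqref{F1} without the $E_\nu$ term, and uniqueness follows from Theorem~\ref{T2}.

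The $E_\nu$ bound with $\nu\in(\xi,1)$ comes afterwards. I feed the difference $w=u(\cdot;u^0)-u(\cdot;u^1)$ into the same representation and use the smoothing estimate $\|U(t,\tau)\|_{\mathcal{L}(E_\gamma,E_\nu)}\le M(t-\tau)^{\gamma-\nu}$ for the semilinear term, which gives a singularity $(t-\tau)^{\gamma-\nu}\tau^{-(\xi-\alpha)q}$. For the initial term I use $\|U(t,0)\|_{\mathcal{L}(E_\alpha,E_\nu)}\le Mt^{-(\nu-\alpha+\eta)}$. This estimate needs $\vartheta_0<\vartheta=\alpha$ and is exactly why the extra $\eta>0$ appears in the weight. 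The operator-difference term is bounded via $\|A(u)-A(v)\|_{\mathcal{L}(E_1,E_0)}\le L\|w\|_\beta$ together with $\|U(t,\tau)\|_{\mathcal{L}(E_0,E_\nu)}\le M(t-\tau)^{-\nu}$, applied to $\|A(u)u\|_{E_0}$ type quantities. This step needs care because $u(\tau)\in E_1$ only with a singular weight; I will bound $\|u(\tau)\|_{E_1}$ through the corresponding weighted estimate near $0$, which holds with $\tau^{\alpha-1-\eta}$-type behaviour.

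I expect the main obstacle to be the uniform control of the critical-case nonlinear term. The smallness there cannot come from $T$ and must come from the weighted smallness of $u-u_*$. This in turn forces me to show, uniformly in $u^0$, that $\sup_{t\le T}t^{\xi-\alpha}\|U(t,0)(u^0-u_*)\|_\xi$ is small. That is true because $r$ is small, but the natural bound gives only $Mr$ and not a vanishing limit, so I will split it as $\|u^0-u_*\|_\alpha\le r$ and choose $r$ so that $Mr\le\varepsilon_0/3$.
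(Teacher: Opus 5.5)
The paper gives no proof of its own here; it cites \cite[Proposition~2.1 and Corollary~2.2]{MSW25}. Your overall plan is the right kind of argument: a contraction around the constant function $u_*$, with smallness in the critical case coming from $r$ and $\varepsilon_0$ rather than from $T$. However, the estimates your construction needs are not the ones you cite, and you assign \eqref{Y} the wrong role.

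\textbf{The main gap: missing loss-free smoothing bounds.} You take all smoothing bounds from \eqref{ev0}. For $0<\vartheta<\theta<1$, \eqref{ev0} only gives $\|U(t,s)\|_{\mathcal{L}(E_\vartheta,E_\theta)}\le M(t-s)^{\vartheta_0-\theta}$ with $\vartheta_0<\vartheta$. You use this restriction yourself in the $E_\nu$ paragraph but ignore it elsewhere, and with this loss two of your steps fail:
\begin{itemize}
\item Your first bullet only yields $t^{\xi-\alpha}\|U(t,0)(u^0-u_*)\|_{\xi}\le Mt^{\vartheta_0-\alpha}r$, which is unbounded as $t\to0$. So the weight $\xi-\alpha$ is out of reach; compare Remark~\ref{RRa1}.
\item In the critical case, $q(\xi-\alpha)=1+\gamma-\alpha$, so $\|f(u(\tau))\|_{E_\gamma}$ behaves like $\tau^{\alpha-\gamma-1}$. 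Your Beta integral then becomes $t^{\xi-\alpha}\int_0^t(t-\tau)^{\vartheta_0-\xi}\tau^{\alpha-\gamma-1}\,\rd\tau\simeq t^{\vartheta_0-\gamma}\to\infty$.
\end{itemize}
The missing idea is that \eqref{Y} is a reiteration hypothesis. Interpolate $U(t,s)\in\mathcal{L}(E_0,E_\theta)\cap\mathcal{L}(E_\theta)$, whose norms are $\le M(t-s)^{-\theta}$ and $\le M$, with the functors in \eqref{Y}. This gives the loss-free bounds
\[
\|U(t,s)\|_{\mathcal{L}(E_\alpha,E_\xi)}\le c\,(t-s)^{\alpha-\xi},\qquad \|U(t,s)\|_{\mathcal{L}(E_\gamma,E_\theta)}\le c\,(t-s)^{\gamma-\theta},\quad \theta\in\{\alpha,\beta,\xi\},
\]
which are exactly what your first and third bullets require. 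The inequality $\|w\|_\gamma\le c\|w\|_0^{1-\gamma/\eta}\|w\|_\eta^{\gamma/\eta}$ that you propose controls $E_\gamma$ only as a target space, so it cannot produce these bounds.

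The same issue affects your H\"older step. \cite[II.Theorem~5.3.1]{LQPP}, as used for \eqref{ww}, needs a forcing in $L_\infty(E_0)$, whereas $f(u(\cdot))$ blows up like $\tau^{\alpha-\gamma-1}$. The exponent $\alpha-\beta$ in $E_\beta$, and the continuity of $\Lambda(u)$ in $E_\alpha$ at $t=0$, follow only from the sharp $E_\gamma\to E_\beta$ and $E_\gamma\to E_\alpha$ bounds. This is why \eqref{Y} lists $\eta\in\{\alpha,\beta,\xi\}$.

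\textbf{Two further steps need repair.}
\begin{itemize}
\item For the $E_\nu$ bound you rely on a uniform estimate $\|u(\tau;u^1)\|_{E_1}\lesssim\tau^{\alpha-1-\eta}$. Your fixed point does not deliver this; it would require weighted time-H\"older regularity of $f(u(\cdot))$. It is simpler to decompose the difference with $U_{A(u)}-U_{A(v)}=\int U_{A(u)}(A(u)-A(v))U_{A(v)}$, as you already do for the contraction. Then use $\|U(\tau,s)\|_{\mathcal{L}(E_\vartheta,E_1)}\le M(\tau-s)^{\vartheta-1}$, which \eqref{ev0} gives without loss because the target is $E_1$. The bound $\|U(t,\tau)\|_{\mathcal{L}(E_\gamma,E_\nu)}\le M(t-\tau)^{\gamma-\nu}$ you quote is also unavailable, since $\nu$ is not covered by \eqref{Y}. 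The resulting loss is absorbed by $\eta>0$, just as in your initial term.
\item Your map $\Lambda$ produces only a mild solution. To get ${\rm C}((0,T],E_1)\cap{\rm C}^1((0,T],E_0)$, and hence $t^+(u^0)>T$ uniformly on the ball, you must first show the fixed point is classical. Only then does the uniqueness in Theorem~\ref{T2}, which is uniqueness among classical solutions, identify it with $u(\cdot;u^0)$.
\end{itemize}
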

\begin{proof}
This is established in \cite[Proposition 2.1 and Corollary 2.2]{MSW25}.
\end{proof}

We are now in a position to prove Theorem~\ref{MT2}.

\begin{proof}[Proof of Theorem~\ref{MT2}]
Under the  assumptions of Theorem~\ref{MT2} we fix $ 0<\omega<\bar\omega<\omega_0$ together with~${\nu\in(\xi,1)}$ and~${\eta\in(0,\gamma)}$.
In view of the embedding~$E_\xi\hookrightarrow E_\beta$, assumption~\eqref{assA} (with $\beta$ replaced by~$\xi$ and~$\alpha$ replaced by~$\nu$), \eqref{E},~\eqref{assB}, and~\eqref{assC} 
(with~$\zeta$ chosen in~$(\nu,1)$) are all satisfied. 
Hence, we may apply  Theorem~\ref{MT1} with~$\omega$ replaced  by~$\bar\omega$   to find constants~$\delta_1>0$ and~$K_1\ge 1$ such that, 
 for each $\|\bar u^0\|_{E_\nu }\leq \delta_1$, the maximal solution~${u=u(\cdot; \bar u^0)}$ to \eqref{CP}  is globally defined, and there exists $\hat u_*=\hat u_*(\bar u^0)\in \mathcal{E}$ with 
 \begin{equation}\label{u2+}
 \|\hat u_*-u_*\|_{E_1}\leq K_1\|\bar u^0-u_*\|_{E_\nu}
 \end{equation}
 by Remark~\ref{R2}, such that  
\begin{equation}\label{stablexx}
\|u(t;\bar u^0)-\hat u_*\|_{E_\nu }\le K_1 e^{-\bar\omega t}\|\bar u^0-u_*\|_{E_\nu },\qquad t\ge 0.
\end{equation}
Let $T> 0$ and $r>0$ be the constants from Proposition~\ref{FF}  and fix an arbitrary $t_*\in (0,T)$.
 Then  we infer from~\eqref{F1} that there exists $\delta\in (0,r)$  and a constant $C>0$ such that for each~$u^0\in \bar{\mathbb{B}}_{E_\alpha }(u_*,\delta)$ 
 we have $\bar u^0:=u(t_*;u^0)\in  \bar{\mathbb{B}}_{E_\nu}(u_*, \delta_1)$ and
\begin{equation}\label{stablex}
\|u(t_*;u^0)-u_*\|_{E_\nu }\le C\|u^0-u_*\|_{E_\alpha}.
\end{equation}
Hence, for such an initial value $u^0\in  \bar{\mathbb{B}}_{E_\alpha }(0,\delta)$,  it follows from $u(t;u^0)=u(t-t_*;u(t_*;u^0))$ for $t\in [t_*,t^+(u^0))$ (by uniqueness of solutions) and $t^+(u(t_*;u^0))=\infty$ that $t^+(u^0)=\infty$.
Moreover, in virtue of~\eqref{stablexx}-\eqref{stablex}, we obtain for~$t\geq t_*$ that
\begin{equation}\label{df}
\begin{aligned}
\|u(t;u^0)-\hat u_*\|_{E_\alpha}&\le C\|u(t;u^0)-\hat u_*\|_{E_\xi } \leq  C \|u(t-t_*;u(t_*;u^0))-\hat u_*\|_{E_\nu }\\
&\le CK_1 e^{-\bar\omega (t-t_*)}\|u(t_*;u^0)-u_*\|_{E_\nu }\\
&\le C  e^{-\bar\omega t}\|u^0-u_*\|_{E_\alpha }. 
\end{aligned}
\end{equation}
Moreover, for $t\in(0,t_*]$, we combine \eqref{F1}, \eqref{u2+}, and \eqref{stablex} to arrive at 
\begin{equation}\label{df1}
\begin{aligned}
&\|u(t;u^0)-\hat u_*\|_{E_\alpha }+t^{\xi-\alpha} \|u(t;u^0)-\hat u_*\|_{E_\xi }\\
&\leq \|u(t;u^0)- u_*\|_{E_\alpha }+t^{\xi-\alpha} \|u(t;u^0)-u_*\|_{E_\xi }+C\|\hat u_*-u_*\|_{E_1}\\
&\leq C(\|u^0-u_*\|_{E_\alpha }+\|u(t_*;u^0)-u_*\|_{E_\nu })\\
&\leq C\|u^0-u_*\|_{E_\alpha }.
\end{aligned}
\end{equation}  
Since $\omega<\bar\omega$, we conclude from~\eqref{df} and \eqref{df1} that there exists  a constant $K\geq 1$  such that, for each~$u^0\in \bar{\mathbb{B}}_{E_\alpha }(u_*,\delta)$, 
the maximal solution~$u(\cdot;u^0)$ to~\eqref{CP} is globally defined and there exists $\hat u_*\in \mathcal{E}$  such that 
\begin{equation*}
\|u(t;u^0)- \hat u_* \|_\alpha +t^{\xi-\alpha}\|u(t;u^0)- \hat u_* \|_\xi\le K e^{-\omega t}\|u^0-u_*\|_\alpha,\qquad t> 0.
\end{equation*}

It remains to prove that $u_*$ is stable. 
But this follows directly from the preceding estimate and from the fact that there is a constant $C>0$ such that,  for each~${u^0 \in \overline{\mathbb{B}}_{E_\alpha}(u_*,\delta)}$, the asymptotic limit 
 $\hat u_*=\hat u_*(u^0)$ satisfies
\[
\|\hat u_* - u_*\|_{E_1} \le C \|u^0 - u_*\|_{E_\alpha}
\]
as established in \eqref{df1}.  This completes the proof of Theorem~\ref{MT2}.
\end{proof}

\section{Examples}\label{Sec:4}

In this section, we illustrate how Theorem~\ref{MT1} can be applied to concrete settings, 
namely the Hele-Shaw problem driven by surface tension and the fractional mean curvature flow, see Examples~\ref{Ex1} and Example~\ref{Ex2}. 
Moreover,  in Example~\ref{Ex3}, we apply Theorem~\ref{MT2} (with $\alpha = \alpha_{\rm crit}$) in the context 
of a parabolic problem exhibiting scaling invariance and a nontrivial semilinearity~$f$.

\subsection{Example (Fractional Mean Curvature Flow)}\label{Ex1} 
The evolution of a  family of periodic graphs~${\{\Sigma(u(t)) : t \ge 0\}}$ in $\mathbb{R}^{n+1}$,
driven by the nonlocal fractional mean curvature of order $\sigma \in (0,1)$, is governed by
\begin{equation}\label{MCF}
    V(t) = - H_\sigma(\Sigma(u(t))), \quad t>0,  \qquad \Sigma(0)=\Sigma_0,
\end{equation}
where $V(t)$ denotes the normal velocity of $\Sigma(u(t))$ and
\begin{align*}
    H_\sigma(\Sigma(u))(x,u(x)):= -\frac{2}{\sigma}\int_{\mathbb{R}^n}
        \frac{u(x)-u(x-y)- y \cdot \nabla u(x-y)}{\big[|y|^2 + (u(x)-u(x-y))^2\big]^{\tfrac{n+1+\sigma}{2}}}\, {\rm d}y, \qquad x \in \mathbb{R}^n,
\end{align*}
is the fractional mean curvature of order $\sigma \in (0,1)$ of the graph~${\Sigma(u)=\{(x,u(x)) : x\in\mathbb{R}^n\}}$. This geometric concept has been introduced quite recently in~\cite{CaffarelliSouganidis10}.
In the  periodic graph setting,  problem~\eqref{MCF} has been formulated in~\cite{MW_2025_Annali}  as a quasilinear  evolution  problem
\begin{equation}\label{QMCF}
u'=A(u)u, \quad t>0,\qquad u(0)=u^0,
\end{equation}
where 
\begin{equation*}
A(u)v(x):=\frac{2}{\sigma}(1+|\nabla u|^2)^{1/2}(x)\int_{\R^n} \frac{v(x)-v(x-y)-y\cdot\nabla v(x-y)}{\big[|y|^2+(u(x)-u(x-y))^2\big]^{\tfrac{n+1+\sigma}{2}}}\,{\rm d}y,\qquad x\in\R^n.
\end{equation*}
A suitable analytic framework for~\eqref{QMCF} is that of  little H\"older spaces ${\rm h}^{s}(\mathbb{T}^n)$, $s\geq0$, defined as the closure of  
${\rm C}^{\infty}(\mathbb{T}^n)$  in the classical H\"older spaces ${\rm C}^{s}(\mathbb{T}^n)$. 
Here,~${\rm C}^{s}(\mathbb{T}^n)$ denotes the Banach space of functions on $\mathbb{R}^n$ which are $2\pi$-periodic in each variable and possess~$(s-[s])$-H\"older continuous derivatives of order 
$[s]:=\max\{k\in\mathbb{N}: k\leq s\}$. 
Moreover, ${\rm C}^{\infty}(\mathbb{T}^n)$ is the intersection of all ${\rm C}^{s}(\mathbb{T}^n)$ with $s\geq 0$. 

As shown in~\cite{MW_2025_Annali}  via a scaling argument, the space ${\rm h}^{1}(\mathbb{T}^n)={\rm C}^{1}(\mathbb{T}^n)$ can be identified, as an 
invariant space for~\eqref{QMCF}. Additionally, it is proven in~\cite{MW_2025_Annali} that problem~\eqref{QMCF} is locally well-posed in the subcritical spaces ${\rm h}^{1+\vartheta}(\mathbb{T}^n)$ 
with $\vartheta\in(0,1)$ arbitrarily close to~$0$. Also in a graph setting, but allowing for 
functions with linear growth that possess gradients in~${\rm C}^{\vartheta}(\mathbb{R}^n)$, $\vartheta>\sigma$, the local well-posedness has been recently established in~\cite{AFW24}.

Concerning stability properties of equilibria to~\eqref{QMCF}, we note that the stationary solutions 
are exactly the constant functions \cite{AFW24, MW_2025_Annali}, which form the one-dimensional linear manifold
\begin{equation}\label{eqex1}
\mathcal{E}=\R.
\end{equation}
We point out that it is not at all clear whether there are flow invariants for~\eqref{QMCF}, and, in particular, whether
 the integral mean 
$$
\langle u\rangle :=\frac{1}{(2\pi)^n}\int_{\mathbb{T}^n} u\,{\rm d}x
$$
of solutions is conserved by the flow. This makes the stability analysis more
intricate.
We therefore rely on Theorem~\ref{MT1} for this purpose.

\begin{thm}\label{T:Ex1}
Let $\sigma,\vartheta\in(0,1)$ be arbitrary and choose $\eta\in(0,1)$ such that
\begin{equation}\label{exponents}
\max\{\sigma,\,\vartheta\}<\eta<\sigma+\vartheta.
\end{equation}
Then, given $u^0\in{\rm h}^{1+\vartheta}(\mathbb{T}^n)$, there exists a unique maximal solution 
$u=u(\,\cdot\,;u^0)$ to \eqref{QMCF} such that
\begin{equation}\label{Ex1:WP}
u\in {\rm C}\big([0,t^+),{\rm h}^{1+\vartheta}(\mathbb{T}^n)\big) \cap {\rm C}\big((0,t^+),{\rm h}^{1+\eta}(\mathbb{T}^n)\big)
   \cap {\rm C}^1\big((0,t^+),{\rm h}^{\eta-\sigma}(\mathbb{T}^n)\big),
\end{equation}
with $t^+=t^+(u^0)\in(0,\infty]$ denoting the maximal existence time.

Moreover, each constant is a stable equilibrium in ${\rm h}^{1+\vartheta}(\mathbb{T}^n)$. 
In addition, there exists a constant $\omega_0>0$ and for each $\omega\in(0,\omega_0)$ there are further constants~$\delta>0$ and~$K\geq1$  such  that, for each 
$u^0\in{\rm h}^{1+\vartheta}(\mathbb{T}^n)$ with
\[
\|u^0-\langle u^0\rangle\|_{{\rm h}^{1+\vartheta}}\leq \delta,
\]
the maximal solution $u=u(\cdot;u^0)$ is globally defined and there is 
$\hat u_*\in\R$ such that 
\begin{equation}\label{Ex1:st}
\|u(t)-\hat u_*\|_{{\rm h}^{1+\vartheta}}\leq K e^{-\omega t}\|u^0-\langle u^0\rangle\|_{{\rm h}^{1+\vartheta}},\qquad t\geq0.
\end{equation}
\end{thm}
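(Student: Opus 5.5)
We will apply Theorem~\ref{MT1} with $E_0:={\rm h}^{\eta-\sigma}(\mathbb{T}^n)$, $E_1:={\rm h}^{1+\eta}(\mathbb{T}^n)$, $f\equiv 0$, and continuous interpolation functors $(\cdot,\cdot)^0_{\theta,\infty}$. These functors give $E_\theta={\rm h}^{(1-\theta)(\eta-\sigma)+\theta(1+\eta)}(\mathbb{T}^n)$. We choose $\alpha$ so that $E_\alpha={\rm h}^{1+\vartheta}(\mathbb{T}^n)$, i.e.\ $\alpha=(1+\vartheta-\eta+\sigma)/(1+\sigma)$, which lies in $(0,1)$ by \eqref{exponents}. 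We then pick $\beta<\alpha$ such that $E_\beta={\rm h}^{1+\tau}$ with $\tau\in(\eta-\sigma,\vartheta)$; this is possible since $\eta<\sigma+\vartheta$, and we may also take $\tau>0$. From \cite{MW_2025_Annali} we take the facts that $A\in{\rm C}^\infty(O_\beta,\mathcal{H}({\rm h}^{1+\eta},{\rm h}^{\eta-\sigma}))$ for $O_\beta:=E_\beta$. The condition $\tau>\eta-\sigma$ is exactly what is needed there: the coefficients must be more regular than the target space. With this, \eqref{assA} holds with any $\gamma\le\beta$, and Theorem~\ref{T1} gives \eqref{Ex1:WP}.

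Next come the spectral hypotheses. Because $A(u)$ annihilates constants, $A(u)c=0$ for all $u$, so $\mathcal{E}$ contains $\R$, and \eqref{eqex1} gives equality. By translation invariance ($A(u+c)=A(u)$) it suffices to treat $u_*=0$, after replacing $u^0$ by $u^0-\langle u^0\rangle$ at the end. We take $\Psi(\xi)=\xi$, which gives \eqref{E} with $m=1$. Since $f=0$ and $u_*=0$, we get $A_*(0)=A(0)=c_\sigma(-\Delta)^{(1+\sigma)/2}$ up to sign. Concretely, $A(0)v=\frac2\sigma\int (v(x)-v(x-y)-y\cdot\nabla v(x-y))|y|^{-n-1-\sigma}\,{\rm d}y$, and integrating by parts in $y$ shows this equals $-c(-\Delta)^{(1+\sigma)/2}v$ with $c>0$. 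This operator is a Fourier multiplier $-c|k|^{1+\sigma}$ on $\mathbb{T}^n$. Its kernel is the constants, so \eqref{tangent} holds. The spectral projection is $P v=\langle v\rangle$, so \eqref{semisimple} holds with ${\rm rg}(A_*(0))$ the zero-mean functions. The rest of the spectrum is $\{-c|k|^{1+\sigma}:k\ne0\}$, which lies in $[\mathrm{Re}\,z\le -c]$; this gives \eqref{spectrum} with $\omega_0=c$. The multiplier statements transfer to little Hölder spaces by the standard multiplier theorems (or directly from the generator property combined with compactness of the resolvent).

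Assumption~\eqref{assC} should be the main point, and we expect it to be easy here. Take $F:={\rm h}^{-1}$ or, better, $F:=L_1(\mathbb{T}^n)$, or simply $F:=\mathcal{D}'$-type spaces on which $\langle\cdot\rangle$ is bounded. The averaging projection $P$ extends to $F$ with $P(F)=\R=E_1^0$. Take $G:={\rm h}^{1+\tau'}$ for some $\tau'\in(\eta-\sigma,1)$ close to $\eta$ (so $G\supset E_\zeta$ for $\zeta$ near $1$), and $F:={\rm h}^{\tau'-\sigma}$ or even ${\rm C}(\mathbb{T}^n)$. The needed bound $\|A(w)-A(0)\|_{\mathcal{L}(G,F)}\le L\|w\|_{E_\beta}$ then follows from the local Lipschitz property of $w\mapsto A(w)$ as a map into $\mathcal{L}({\rm h}^{1+\tau'},{\rm h}^{\tau'-\sigma})$ established in \cite{MW_2025_Annali}. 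Actually the trivial choice $F=E_0$, $G=E_1$, $\zeta$ is not allowed since $\zeta<1$, so the slight regularity gap has to be absorbed by using $\tau'$ slightly below $\eta$ while still above $\eta-\sigma$. This is where we must check carefully that the Lipschitz estimates of \cite{MW_2025_Annali} hold for the full scale of exponents and not only for the ones used for well-posedness.

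Finally, Theorem~\ref{MT1} at $u_*=\langle u^0\rangle$ gives stability, global existence, and the estimate $\|u(t)-\hat u_*\|\le Ke^{-\omega t}\|u^0-\langle u^0\rangle\|_{{\rm h}^{1+\vartheta}}$. The constants $\delta,K$ do not depend on the constant $\langle u^0\rangle$, because of translation invariance: $u(\cdot;u^0)-\langle u^0\rangle=u(\cdot;u^0-\langle u^0\rangle)$. This yields \eqref{Ex1:st}.
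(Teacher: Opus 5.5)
Your proposal is correct and follows essentially the same route as the paper. It uses the same setting: $E_0={\rm h}^{\eta-\sigma}$ and $E_1={\rm h}^{1+\eta}$ with continuous interpolation, and $E_\beta={\rm h}^{1+\tau}$ with $\tau\in(\eta-\sigma,\vartheta)$. It checks \eqref{assC} via \eqref{property} at shifted exponents with $G=E_\zeta={\rm h}^{1+\tau'}$ and $F={\rm h}^{\tau'-\sigma}$, and it reduces to $u_*=0$ by translation invariance.

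Two points should be tightened:
\begin{itemize}
\item The constraint on $\tau'$ that is actually needed to invoke \eqref{property} is $\max\{\sigma,\tau\}<\tau'<\eta\,(<\sigma+\tau)$, not $\tau'>\eta-\sigma$. Your choice of $\tau'$ slightly below $\eta$ satisfies it. This also settles your worry about the ``full scale of exponents'', because \eqref{property} holds for every admissible pair.
\item $O_\beta$ must be shrunk to a small ball around $0$, so that \eqref{110b} holds with a uniform constant $L$.
\end{itemize}
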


\begin{proof}
The local well-posedness of \eqref{QMCF} in the class \eqref{Ex1:WP} follows by applying the 
quasilinear theory from \cite{Am93, MW20} in view of the property (see \cite[Equation~(1.6)]{MW_2025_Annali})
\begin{equation}\label{property}
A\in{\rm C}^\infty\big({\rm h}^{1+\vartheta}(\mathbb{T}^n), \mathcal{H}({\rm h}^{1+\eta}(\mathbb{T}^n),   {\rm h}^{\eta-\sigma}(\mathbb{T}^n))\big),\qquad \max\{\sigma,\,\vartheta\}<\eta<\sigma+\vartheta,
\end{equation}
and the fact that the little H\"older spaces are stable with respect to continuous interpolation, that is,
\begin{equation*}\label{interpol}
({\rm h}^{s_1}(\mathbb{T}^n),{\rm h}^{s_2}(\mathbb{T}^n))_{\theta,\infty}^0
   ={\rm h}^{(1-\theta)s_1+\theta s_2}(\mathbb{T}^n)
\end{equation*}
for $0\leq s_1<s_2$ with $(1-\theta)s_1+\theta s_2\not\in\mathbb{N}$,
where $(\cdot,\cdot)_{\theta,\infty}^0$ is the continuous interpolation functor of 
exponent $\theta\in(0,1)$, see \cite{DG79}. We refer to~\cite{MW_2025_Annali} for details.

We next turn to the exponential convergence stated in~\eqref{Ex1:st}. 
To this end, we use a suitable reformulation of \eqref{QMCF}. We first note that  we may restrict to the equilibrium~${u_*=0}$, since  
\begin{equation}\label{invarex1}
u(\cdot;u^0+c)=u(\cdot;u^0)+c,\qquad c\in\R,\ u^0\in {\rm h}^{1+\vartheta}(\mathbb{T}^n),
\end{equation}
which is a consequence of the fact that
\begin{equation*}
A(u+c_1)[v+c_2]=A(u)[v], \qquad c_1, c_2\in\R.
\end{equation*}
Let $\sigma, \vartheta , \eta\in (0,1) $ satisfy~\eqref{exponents} and define $E_0:= {\rm h}^{\eta-\sigma}(\bT^n)$ and $E_1:= {\rm h}^{1+\eta}(\bT^n)$.  
Choosing exponents~$\vartheta'\in(\eta-\sigma,\vartheta)$ and $\eta'\in(\max\{\vartheta,\,\sigma\},\eta)$, we  infer from   \eqref{exponents} that
\begin{equation}\label{exponents*}
\max\{\sigma,\,\vartheta'\}\leq \max\{\sigma,\,\vartheta\}<\eta'<\eta<\sigma+\vartheta'<\sigma+\vartheta.
\end{equation}
Thus, setting
 \[
0<\beta:=1-\frac{\eta-\vartheta'}{1+\sigma}<\alpha:=1-\frac{\eta-\vartheta}{1+\sigma}<\zeta:=1-\frac{\eta-\eta'}{1+\sigma}<1,
 \]
it follows from \eqref{interpol}, with $E_\theta:=(E_0,E_1)_{\theta,\infty}^0$ for $\theta\in[0,1]$, that
 \begin{equation*} 
E_\zeta={\rm h}^{1+\eta'}(\bT^n)\hookrightarrow E_\alpha={\rm h}^{1+\vartheta}(\bT^n)\hookrightarrow E_\beta={\rm h}^{1+\vartheta'}(\bT^n).
 \end{equation*}
This and \eqref{property}  ensures that assumption \eqref{assA}  holds.
 Moreover, recalling \eqref{eqex1}, also assumption~\eqref{E} is satisfied.
 
 We  then consider the linearized operator $A_*(0)=A(0)\in\kL(E_1,E_0)$, which  is the Fourier multiplier with symbol $m(k)=-\omega_0|k|^{1+\sigma}$, $k\in\mathbb{Z}$, where  $\omega_0>0$ is a positive constant, see~\cite[Section~4]{MW_2025_Annali}. In particular, the kernel of $A_*(0)$ are the constant functions, $0$ is a semi-simple eigenvalue of $A_*(0)$ and 
\[
\sigma(A_*(0))\setminus\{0\}=\big\{-\omega_0|k|^{1+\sigma}\,:\, k\in\mathbb{Z}\setminus\{0\}\big\} \subset(-\infty, -\omega_0],
\] 
 hence also \eqref{assB} is satisfied.
 
 Finally, to verify  assumption \eqref{assC}, we note that the projection corresponding to the spectral set $\{0\}$ of $A_*(0)$ is given by
 \[
Pu:=\langle u\rangle =\frac{1}{(2\pi)^n}\int_{\mathbb{T}^n} u\,{\rm d}x,\qquad u\in E_0.
\]
Moreover, in view of \eqref{property} and \eqref{exponents*}, we  further have
\begin{equation*}
A\in{\rm C}^\infty\big({\rm h}^{1+\vartheta'}(\mathbb{T}^n), \mathcal{H}({\rm h}^{1+\eta'}(\mathbb{T}^n),   {\rm h}^{\eta'-\sigma}(\mathbb{T}^n))\big).
\end{equation*}
Hence, setting $F := \mathrm{h}^{\eta'-\sigma}(\mathbb{T}^n)$ and $G = E_\zeta := \mathrm{h}^{1+\eta'}(\mathbb{T}^n)$, and choosing 
$O_\beta$ to be a sufficiently small neighborhood of $0$ in $E_\beta={\rm h}^{1+\vartheta'}(\mathbb{T}^n)$, this property shows that~\eqref{assC}  holds (with $P\in\kL(F)$ as defined above).

 Consequently, we may  apply Theorem~\ref{MT1} to the evolution problem~\eqref{QMCF} and conclude that $0$ (hence also every other constant solution) is a stable equilibrium of \eqref{QMCF}. 
 Moreover,  given any~$\omega\in(0,\omega_0)$, there exist constants~$\delta>0$ and~$K\geq 1$ such that, for each~${u^0\in \mathbb{B}_{E_\alpha}(0,\delta)}$,  the maximal solution~$u(\cdot;u^0)$ to \eqref{QMCF} is globally defined and there exists $\hat u_*\in\R$ such that
\begin{equation*} 
\|u(t;u^0)-\hat u_*\|_{E_\alpha}\leq K e^{-\omega t}\|u^0\|_{E_\alpha},\qquad t\geq 0.
\end{equation*}
Estimate \eqref{Ex1:st} is now a straightforward consequence of this estimate and of property~\eqref{invarex1}.
\end{proof}


The exponential convergence~\eqref{Ex1:st}  has also been shown in \cite[Theorem~1.3]{MW_2025_Annali} by a direct proof based on the principle of linearized stability from  \cite[Theorem~1.3]{MW20}. However, the proof given herein is considerably less technical.
For a related convergence of bounded global solutions in 
${\rm C}^{1+\vartheta}(\mathbb{T}^n)$ (with $\vartheta>\sigma$) to flat graphs via an energy approach,  we refer to~\cite{CN24}.\medskip

As we have seen in the above proof, the verification of~\eqref{assC} was rather simple due to the given freedom for the choice of the parameters in~\eqref{property}. That is, it was a consequence of the fact that the quasilinear part has a generation property in a scale of interpolation spaces. It is worthwhile to point out that this is a special case of the following general observation, where~\eqref{XXXXX} reflects the situation of~\eqref{property}:

\begin{prop}\label{PPP}
\begin{subequations}\label{XXXXX}
Consider a densely and compactly injected Banach couple $\mathsf{F}_1\hookrightarrow \mathsf{F}_0$ and for  arbitrary admissible interpolation functors
 $(\cdot,\cdot)_\theta$ with $\theta\in [0,1]$ set $\mathsf{F}_\theta:=(\mathsf{F}_0,\mathsf{F}_1)_\theta$. Fix
\begin{equation}\label{410}
0\le\theta_0<\theta_1<\theta_2< \theta_3<\theta_4<\theta_5\le 1,
\end{equation}
let $\mathsf{O}_{\mathsf{F}_{\theta_2}}$ be open in $\mathsf{F}_{\theta_2}$, and set $\mathsf{O}_{\mathsf{F}_{\theta_3}}:=\mathsf{O}_{\mathsf{F}_{\theta_2}}\cap \mathsf{F}_{\theta_3}$. 
Assume that
\begin{equation}\label{AAAA}
\mathsf{A}\in {\rm C}^1\big(\mathsf{O}_{\mathsf{F}_{\theta_3}},\mathcal{H}(\mathsf{F}_{\theta_5},\mathsf{F}_{\theta_1})\big)
\cap {\rm C}^1\big(\mathsf{O}_{{\mathsf{F}}_{\theta_2}},\mathcal{H}({\mathsf{F}}_{\theta_4},{\mathsf{F}}_{\theta_0})\big)
\end{equation} 
\end{subequations}
and
\begin{equation}\label{ffff}
\mathsf{f}\in {\rm C}^1\big(\mathsf{O}_{\mathsf{F}_{\theta_3}},\mathsf{F}_{\theta_1}\big).
\end{equation}
Let $u_*\in \mathsf{F}_{\theta_5}\cap \mathsf{O}_{\mathsf{F}_{\theta_2}}$, set
\begin{equation}\label{A3x}
\mathsf{A}_*:=\mathsf{A}(u_*)+(\partial \mathsf{A}(u_*)[\cdot]) u_* +\partial \mathsf{f}(u_*)\in\mathcal{H}(\mathsf{F}_{\theta_5},\mathsf{F}_{\theta_1}) \cap \mathcal{H}(\mathsf{F}_{\theta_4},\mathsf{F}_{\theta_0}),
\end{equation}
and assume that  $0$ is  a semi-simple eigenvalue of $\mathsf{A}_*\in  \mathcal{L}(\mathsf{F}_{\theta_5},\mathsf{F}_{\theta_1})$. Then  $0$ is also a semi-simple eigenvalue of $\mathsf{A}_*\in \mathcal{L}(\mathsf{F}_{\theta_4},\mathsf{F}_{\theta_0})$.

If
$\mathsf{P}\in \mathcal{L}(\mathsf{F}_{\theta_0}) \cap \mathcal{L}(\mathsf{F}_{\theta_1})$ denotes the spectral projection  onto $\mathrm{ker}(A_*)$ corresponding to the spectral set~$\{0\}$, then
 \eqref{assC} is valid for $(E_0,E_1)=(\mathsf{F}_{\theta_1},\mathsf{F}_{\theta_5})$ and $(F,G)=(\mathsf{F}_{\theta_0},\mathsf{F}_{\theta_4})$.
\end{prop}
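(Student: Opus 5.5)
The plan is to first identify spectral objects of $\mathsf{A}_*$ in the two realizations and then read off \eqref{assC}. Write $\mathsf{A}_*^{(1)}\in\mathcal{L}(\mathsf{F}_{\theta_5},\mathsf{F}_{\theta_1})$ and $\mathsf{A}_*^{(0)}\in\mathcal{L}(\mathsf{F}_{\theta_4},\mathsf{F}_{\theta_0})$. Since $\mathsf{F}_{\theta_5}\hookrightarrow \mathsf{F}_{\theta_4}$ and the two operators act by the same formula, $\mathsf{A}_*^{(1)}$ is the part of $\mathsf{A}_*^{(0)}$ in $\mathsf{F}_{\theta_1}$. This needs the regularity statement: if $u\in\mathsf{F}_{\theta_4}$ and $(\lambda-\mathsf{A}_*^{(0)})u\in\mathsf{F}_{\theta_1}$, then $u\in \mathsf{F}_{\theta_5}$.

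To prove it, take $\lambda$ large real, lying in both resolvent sets (both operators are generators). Put $w:=(\lambda-\mathsf{A}_*^{(1)})^{-1}(\lambda-\mathsf{A}_*^{(0)})u\in\mathsf{F}_{\theta_5}$. Then $(\lambda-\mathsf{A}_*^{(0)})(u-w)=0$, and injectivity of $\lambda-\mathsf{A}_*^{(0)}$ gives $u=w$. Hence the resolvents are consistent,
\[
(\lambda-\mathsf{A}_*^{(1)})^{-1}=(\lambda-\mathsf{A}_*^{(0)})^{-1}|_{\mathsf{F}_{\theta_1}},
\]
for all $\lambda$ in both resolvent sets.

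Compactness of $\mathsf{F}_1\hookrightarrow\mathsf{F}_0$ passes to the interpolation spaces with strictly ordered exponents, so both operators have compact resolvent. Their spectra are therefore discrete sets of eigenvalues of finite algebraic multiplicity. By the regularity statement applied iteratively to generalized eigenvectors, the two spectra coincide. In particular $0$ is an isolated point of $\sigma(\mathsf{A}_*^{(0)})$.

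Next take the Dunford integral over a small circle around $0$. This gives spectral projections $\mathsf{P}_0\in\mathcal{L}(\mathsf{F}_{\theta_0})$ and $\mathsf{P}_1\in\mathcal{L}(\mathsf{F}_{\theta_1})$ with $\mathsf{P}_1=\mathsf{P}_0|_{\mathsf{F}_{\theta_1}}$, by resolvent consistency. The range $\mathrm{rg}(\mathsf{P}_0)$ is finite-dimensional, and $\mathsf{F}_{\theta_1}$ is dense in $\mathsf{F}_{\theta_0}$. So $\mathsf{P}_0(\mathsf{F}_{\theta_1})=\mathrm{rg}(\mathsf{P}_1)$ is dense in $\mathrm{rg}(\mathsf{P}_0)$, hence equal to it. By semi-simplicity in the $\theta_1$-setting,
\[
\mathrm{rg}(\mathsf{P}_0)=\mathrm{rg}(\mathsf{P}_1)=\ker(\mathsf{A}_*^{(1)})\subset \mathsf{F}_{\theta_5}.
\]
Thus $\mathsf{A}_*^{(0)}\mathsf{P}_0=0$, and $\ker(\mathsf{A}_*^{(0)})=\ker(\mathsf{A}_*^{(1)})$ by regularity. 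On $\mathrm{rg}(1-\mathsf{P}_0)$ the operator $\mathsf{A}_*^{(0)}$ is invertible, since $0$ lies outside its spectrum there. So $\mathrm{rg}(\mathsf{A}_*^{(0)})=\mathrm{rg}(1-\mathsf{P}_0)$ is closed and complements the kernel, which is the semi-simplicity claim. The projection $P$ in \eqref{assC} is then $\mathsf{P}_0$, and $P(F)=\ker(\mathsf{A}_*)=E_1^0$, so $P\in\mathcal{L}(F,P(F))$ and $P(F)\hookrightarrow E_1^0$ hold trivially.

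For the remaining parts of \eqref{assC}, with $(E_0,E_1)=(\mathsf{F}_{\theta_1},\mathsf{F}_{\theta_5})$, I would choose $\zeta<1$ so close to $1$ that $(1-\zeta)\theta_1+\zeta\theta_5>\theta_4$. By reiteration-type embeddings for admissible functors with strictly ordered exponents, this gives $E_\zeta\hookrightarrow\mathsf{F}_{\theta_4}=G$. Likewise, I would choose $\beta$ so that $E_\beta\hookrightarrow\mathsf{F}_{\theta_3}\hookrightarrow\mathsf{F}_{\theta_2}$, consistent with the requirement $A\in{\rm C}^1(O_\beta,\mathcal{H}(E_1,E_0))$ of \eqref{assA}. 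The Lipschitz bound \eqref{110b} then follows from $\mathsf{A}\in{\rm C}^1(\mathsf{O}_{\mathsf{F}_{\theta_2}},\mathcal{L}(\mathsf{F}_{\theta_4},\mathsf{F}_{\theta_0}))$ via the mean value theorem on a small closed convex $\mathsf{F}_{\theta_2}$-ball around $u_*$. Shrinking $O_\beta$ to the trace of that ball yields $\|A(w)-A(u_*)\|_{\mathcal{L}(G,F)}\le L\|w-u_*\|_{\mathsf{F}_{\theta_2}}\le L'\|w-u_*\|_{E_\beta}$.

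I expect the main obstacle to be the identification $\mathrm{rg}(\mathsf{P}_0)=\ker(\mathsf{A}_*^{(1)})$, in particular justifying that the spectral data (isolation of $0$, finite rank) transfer to the larger space. Compactness of the resolvent, obtained from the compact injection $\mathsf{F}_1\hookrightarrow\mathsf{F}_0$ through interpolation, is the tool I would use for that. The interpolation embeddings are routine.
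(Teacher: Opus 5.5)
Your proposal is correct and follows essentially the paper's route: compact resolvents give coinciding spectra, the Dunford integral yields consistent projections, density of $\mathsf{F}_{\theta_1}$ in $\mathsf{F}_{\theta_0}$ transfers semi-simplicity, and the regularity argument identifies the kernels. Your finite-rank density step is just a variant of the paper's extension of $\mathsf{A}_*\mathsf{P}=0$ from $\mathcal{L}(\mathsf{F}_{\theta_1})$ to $\mathcal{L}(\mathsf{F}_{\theta_0})$.

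The one thing you take for granted is the generation property \eqref{A3x}, which your choice of a common real $\lambda$ relies on. The paper derives it from almost reiteration and the perturbation theorem, since $(\partial\mathsf{A}(u_*)[\cdot])u_*+\partial\mathsf{f}(u_*)\in\mathcal{L}(\mathsf{F}_{\theta_3},\mathsf{F}_{\theta_1})$ is a lower-order perturbation of $\mathsf{A}(u_*)$ in both settings.
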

 
\begin{proof}
Let us first observe that
$$
\mathsf{A}(u_*)\in\mathcal{H}(\mathsf{F}_{\theta_5},\mathsf{F}_{\theta_1}) \cap \mathcal{H}(\mathsf{F}_{\theta_4},\mathsf{F}_{\theta_0}),\qquad (\partial \mathsf{A}(u_*)[\cdot]) u_* +\partial \mathsf{f}(u_*)\in\mathcal{L}(\mathsf{F}_{\theta_3},\mathsf{F}_{\theta_1}),
$$
 so that the almost reiteration property~\cite[I.Remarks~2.11.2~(a)]{LQPP} together with~\eqref{410} and the perturbation result~\cite[I.Theorem~1.3.1~(ii)]{LQPP} imply~\eqref{A3x}. 

 Next, since  $0$ is  a semi-simple eigenvalue of $\mathsf{A}_*\in  \mathcal{L}(\mathsf{F}_{\theta_5},\mathsf{F}_{\theta_1})$, it follows from~\cite[Proposition~A.2.2]{L95} that 0 is a simple pole of $\lambda\mapsto  \big(\lambda-\mathsf{A}_*\big)^{-1}$ (as a mapping into $\mathcal{L}(\mathsf{F}_{\theta_1})$), or equivalently, that 
\begin{equation}\label{z2}
0=\mathsf{A}_*\mathsf{P}\in \mathcal{L}(\mathsf{F}_{\theta_1})
\end{equation} 
according to~\cite[Proposition~A.2.1]{L95}.  Noticing that $\mathsf{A}_*\in\mathcal{L}(\mathsf{F}_{\theta_5},\mathsf{F}_{\theta_1})$ and  $\mathsf{A}_*\in\mathcal{L}(\mathsf{F}_{\theta_4},\mathsf{F}_{\theta_0})$ both have compact resolvents, their resolvent sets coincide as they consist only of eigenvalues. Hence 0 is also an isolated eigenvalue of $\mathsf{A}_*\in\mathcal{L}(\mathsf{F}_{\theta_4},\mathsf{F}_{\theta_0})$ and it follows that
$$
\mathsf{P}=\frac{1}{2\pi i}\int_{\Gamma} \big(\lambda-\mathsf{A}_*\big)^{-1}\,\rd\lambda \in \mathcal{L}\big(\mathsf{F}_{\theta_0}\big)\cap \mathcal{L}\big(\mathsf{F}_{\theta_1}\big),
$$
where $\Gamma$ is a curve in the resolvent set around 0, see~\cite[Appendix~A]{L95}. 
As $\mathsf{F}_{\theta_1}$ is dense in~$\mathsf{F}_{\theta_0}$, we infer from~\eqref{z2} that $0=\mathsf{A}_*\mathsf{P}\in \mathcal{L}(\mathsf{F}_{\theta_0})$ and therefore 0 is a semi-simple eigenvalue of $\mathsf{A}_*\in \mathcal{L}(\mathsf{F}_{\theta_4},\mathsf{F}_{\theta_0})$ due to~\cite[Proposition~A.2.2]{L95}.

Also note from~\cite[Proposition~A.2.2]{L95} that 
$$
\mathsf{P}\big(\mathsf{F}_{\theta_0}\big)=\mathrm{ker}\Big(\mathsf{A}_*\big\vert_{\mathcal{L}(\mathsf{F}_{\theta_4},\mathsf{F}_{\theta_0})}\Big), \qquad
\mathsf{P}\big(\mathsf{F}_{\theta_1}\big)=\mathrm{ker}\Big(\mathsf{A}_*\big\vert_{\mathcal{L}(\mathsf{F}_{\theta_5},\mathsf{F}_{\theta_1})}\Big).
$$
To identify these kernels, let $x\in \mathsf{F}_{\theta_4}$ satisfy $\mathsf{A}_*x=0$. 
Then  $(\omega-\mathsf{A}_*)x=\omega x\in \mathsf{F}_{\theta_1}$ for any~$\omega\in \Gamma$,  which implies $x\in \mathsf{F}_{\theta_5}$.  
Therefore, the kernels coincide, hence $\mathsf{P}\big(\mathsf{F}_{\theta_0}\big)=\mathsf{P}\big(\mathsf{F}_{\theta_1}\big)$. 
 This proves~\eqref{assC1}, while~\eqref{110b} is a straightforward consequence of~\eqref{AAAA}. 
 The proof of Proposition~\ref{PPP} is thus complete.
\end{proof}

 In Proposition~\ref{PPP} one may replace the requirement of a \textit{compact} embedding $\mathsf{F}_1\hookrightarrow \mathsf{F}_0$ by the assumption that 0 is also an isolated eigenvalue of $\mathsf{A}_*\in  \mathcal{L}(\mathsf{F}_{\theta_4},\mathsf{F}_{\theta_1})$.

We also note that the explicit computation of the projection~$P$ is not required in concrete examples when applying Proposition~\ref{PPP}, see Example~\ref{Ex3}.

\subsection{Example (Hele-Shaw Problem Driven by Capillarity)}\label{Ex2} 
The capillarity-driven Hele-Shaw problem~\cite{HS98} models the two-dimensional motion of an incompressible fluid blob confined 
to a narrow channel between two parallel plates. 
Let $\Omega(t) \subset \mathbb{R}^2$ be the bounded domain occupied by the fluid at time  $t \ge 0$ and let $\Gamma(t) := \partial \Omega(t)$ be its boundary. 
The evolution is then governed by the system
\begin{subequations}\label{HSP}
\begin{equation}\label{HSP1}
\left.
\arraycolsep=1.4pt
\begin{array}{rcllll}
\Delta u(t)&=&0&\quad\text{in $\Omega(t)$,}\\
u(t)&=& \kappa_{\Gamma(t)}&\quad\text{on $\Gamma(t)$,}\\
V(t)&=&-\partial_{{\rm n}_{\Gamma(t)}} u(t)&\quad\text{on $\Gamma(t)$}
\end{array}
\right\}
\end{equation}
for $t>0$.
Here, $u(t)$ denotes the fluid pressure. 
The velocity field  is assumed to obey Darcy’s law~\cite{Da56} and has been eliminated from the system using the incompressibility property. 
Furthermore, $V(t)$ denotes the normal velocity of $\Gamma(t)$, ${\rm n}_{\Gamma(t)}$ its outer unit normal, and $\kappa_{\Gamma(t)}$ its curvature. 
In~\eqref{HSP1}, all material parameters are normalized to~$1$.
Additionally, we impose the initial condition
\begin{equation}\label{HSP2}
\arraycolsep=1.4pt
\begin{array}{rcl}
\Omega(0)&=&\Omega_0.
\end{array}
\end{equation}
\end{subequations}

An important feature of the  flow \eqref{HSP} is the fact that it preserves both the area and the center of mass of the fluid domain, that is,
\begin{align}
\frac{{\rm d}}{{\rm d} t}|\Omega(t)| &= -\int_{\Gamma(t)} \partial_{{\rm n}_{\G(t)}} u(t)\,  {\rm d}\Gamma = 0, \label{lol1}\\
-\frac{{\rm d}}{{\rm d} t}\int_{\Omega(t)} z\,{\rm d}z &=\int_{\Gamma(t)} z\partial_{{\rm n}_{\G(t)}} u(t)\, {\rm d}\Gamma= \int_{\Gamma(t)} u(t){\rm n}_{\G(t)}\, {\rm d}\Gamma 
=\int_{\Gamma(t)} \kappa_{\Gamma(t)} {\rm n}_{\G(t)}\, {\rm d}\Gamma = 0,\label{lol2}
\end{align}
where $z=(x,y)$ is the variable in $\R^2$; see e.g. \cite{NT98, MW2025x}.
By a maximum–principle argument, one may identify the stationary solutions to \eqref{HSP} 
as a three–dimensional smooth manifold~$\mathcal{E}$ consisting exclusively of circles (with varying radii and centers).\medskip

A broad range of analytical techniques has been developed to study the stability of circular equilibria for \eqref{HSP} as well as in related geometric free–boundary problems. 
For the two-phase Hele–Shaw (Muskat) setting, \cite{CP93} established convergence to a circle for small analytic perturbations by employing complex–analytic methods together with energy estimates.
 Based on a center–manifold approach, stability results were obtained in Hölder-type settings: 
\cite{ES98}  treated perturbations of spheres in the little Hölder spaces ${\rm h}^{2+\alpha}(\Sigma)$ in the context of the Mullins–Sekerka problem (where $\Sigma$ is a sphere in $\mathbb{R}^n$, $n\geq 2$), 
while \cite{EMM10} used this approach in a non-Newtonian Hele–Shaw flow in the regularity class ${\rm h}^{4+\alpha}(\mathbb{T})$.
In the framework of the Mullins–Sekerka problem, \cite{PSZ9a, PS16} used  a generalized principle of linearized stability based on $L_p$–maximal regularity to prove
exponential stability for perturbations in $W^{4-4/p}_p(\Sigma)\hookrightarrow {\rm C}^2(\Sigma)$, $p>n+2$.
The stability of Muskat bubbles in (critical) Wiener spaces on the torus~$\mathbb{T}$ was addressed in \cite{GGPS25,GGPS23} using potential theory and energy methods.

We now show that the stability of equilibria for~\eqref{HSP} can be deduced from Theorem~\ref{MT1} in the scale of Bessel potential spaces $H^{r}(\mathbb{\bT})$ over the torus~$\bT$ 
in a framework of (almost) optimal regularity. 
Specifically, by exploiting scaling invariance, one identifies~$r = 3/2$ as the critical exponent for \eqref{HSP} within the scale of Bessel  potential spaces~\cite{MW2025x}.\medskip

To set the stage, we introduce
\[
 \mathcal{V}_r:=\{\rho\in H^{ r}(\mathbb{T}) : \rho>0\}, \qquad r\in(3/2,2],
\]
and define, for a given function $\rho\in\mathcal{V}_r$, the periodic curve 
\[
\Gamma_\rho := \{ \rho(\tau)\,e^{i\tau} : \tau \in \mathbb{R} \}
\]
enclosing a bounded star–shaped domain $\Omega_\rho\subset\mathbb{R}^2$.
Assuming that $\Omega(t)=\Omega_{\rho(t)}$ for~${t>0}$ and using  potential theory and tools from harmonic analysis, the evolution problem~\eqref{HSP} is reformulated in~\cite{MW2025x} as a quasilinear problem for $\rho$ of the form
\begin{equation}\label{intrQPP}
\rho' = A(\rho)\rho, \quad t>0, \qquad \rho(0)=\rho^0,
\end{equation}
where 
\begin{equation}\label{regP:Ex2}
A \in {\rm C}^\infty\bigl(\mathcal{V}_r,\, \mathcal{L}(H^{r+1}(\mathbb{T}),\, H^{r-2}(\mathbb{T}))\bigr),\qquad r\in(3/2,2].
\end{equation}

Focusing on the stability of the circle centered at the origin  of radius~$1$, that is, the stationary solution $\rho_*=1$ (a similar argument yields exponential stability for any circle of radius $r>0$ centered at $z\in\R^2$), it is natural --~in view of the invariants~\mbox{\eqref{lol1}--\eqref{lol2}}~--~to consider the asymptotic behavior of solutions satisfying the identities
\begin{equation}\label{eqcons}
\int_{-\pi}^{\pi} (\rho^2 - 1)\, {\rm d}s = 0, \qquad
\int_{-\pi}^{\pi} \rho^3 \cos s\, {\rm d}s = \int_{-\pi}^{\pi} \rho^3 \sin s\, {\rm d}s = 0,
\end{equation}
which simply express that the fluid blob has area $\pi$ (the first identity) and that its center of mass is at $(0,0)$ (the second and third identities).
Applying Theorem~\ref{MT1} we establish  the following result:

\begin{thm}\label{T:Ex2}
Let $3/2<\bar r<r<2$. Then problem~\eqref{intrQPP} has for each  $\rho^0\in \mathcal{V}_{r}$
 a unique maximal solution~$\rho=\rho(\cdot;\rho^0) $ such that 
\begin{equation}\label{prop1}
\rho\in {\rm C}\big([0,t^+),\mathcal{V}_{r}\big)\cap {\rm C}\big((0,t^+), H^{\bar r+1}(\bT)\big) \cap {\rm C}^1\big((0,t^+), H^{\bar r-2}(\bT)\big)
\end{equation}
and, for some $\eta\in(0,(r-\bar r)/3]$,
\begin{equation}\label{prop1'}
\rho\in{\rm C}^{\eta}\big([0,t^+), H^{\bar r}(\bT)\big),
\end{equation}
where $t^+=t^+(\rho^0)>0$ is the maximal existence time of the solution.

Moreover, $\rho_*=1$ is a stable equilibrium in $ H^{r}(\bT)$, and, for any  given $\omega\in (0,6)$, there are constants~$\delta>0$  and~${K\geq 1}$ such that for each $\rho^0\in\mathbb{B}_{H^r}(1,\delta)$ 
 satisfying  \eqref{eqcons},  the maximal solution~$\rho(\cdot;\rho^0)$  to \eqref{intrQPP} is globally  defined and
\begin{equation}\label{convEx2}
\|\rho(t;\rho^0)-1\|_{H^{{ r}}}\leq Ke^{-\omega t}\|\rho^0-1\|_{H^{{r}}},\qquad t\geq0.
\end{equation}
\end{thm}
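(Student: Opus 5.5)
The plan is to apply Theorem~\ref{MT1} with $E_0:=H^{\bar r-2}(\bT)$ and $E_1:=H^{\bar r+1}(\bT)$, using complex interpolation $[\cdot,\cdot]_\theta$. Then $E_\theta=H^{\bar r-2+3\theta}(\bT)$, so $E_\alpha=H^r(\bT)$ with $\alpha:=(r-\bar r+2)/3$. Next choose $\bar r<r'<r$ and set $\beta:=(r'-\bar r+2)/3$, so that $E_\beta=H^{r'}(\bT)$, $O_\beta:=\mathcal V_{r'}$ and $\gamma=\beta$ (here $f=0$). Property~\eqref{regP:Ex2} with $r'$ in place of $r$ gives $A\in{\rm C}^\infty(\mathcal V_{r'},\mathcal L(H^{r'+1},H^{r'-2}))$. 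The generator property $A(\rho)\in\mathcal H(E_1,E_0)$ on $H^{\bar r+1}\to H^{\bar r-2}$ comes from \cite{MW2025x}, where $A(\rho)$ is a perturbation of $-c(\rho)|\partial|^3$ by lower-order terms, uniformly in a scale of indices. Theorem~\ref{T1} then yields \eqref{prop1}, and \eqref{prop1'} follows from the Hölder regularity in \eqref{regu} with $\theta$ corresponding to $H^{\bar r}$: the exponent is $\alpha-\theta=(r-\bar r)/3$, and any smaller $\eta$ also works.

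For the stability part, $\mathcal E$ near $1$ consists of circles, given by $\Psi(c,z_1,z_2)$ with $\Psi=\rho$ describing the circle of radius $1+c$ centred at $z$. This $\Psi$ is a smooth map into $H^{\bar r+1}$ of rank $3$, so \eqref{E} holds with $m=3$. The linearization $A_*(0)=A(1)+(\partial A(1)[\cdot])1$ is, as computed in \cite{MW2025x}, the Fourier multiplier with symbol $-(|k|^3-|k|)$, i.e.\ $-|k|(k^2-1)$. Its kernel is $\mathrm{span}\{1,\cos,\sin\}$, which equals $T_1\mathcal E$. Since the operator is a diagonal Fourier multiplier, $0$ is semi-simple, and the rest of the spectrum is $\{-|k|(k^2-1): |k|\ge2\}\subset(-\infty,-6]$. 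This gives \eqref{assB} with $\omega_0=6$. Assumption~\eqref{assC} follows from Proposition~\ref{PPP} on the compact Sobolev scale, using \eqref{regP:Ex2} in two index pairs: $(H^{\bar r+1},H^{\bar r-2})$ and $(H^{r''+1},H^{r''-2})$ with $\bar r-1<r''<\bar r$ and $r''>3/2$. Alternatively, one can check directly that $P$ is the Fourier truncation to modes $|k|\le1$, which extends to $F=H^{r''-2}$ with $G=H^{r''+1}=E_\zeta$.

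Theorem~\ref{MT1} then gives stability and convergence $\rho(t)\to\hat\rho_*$ at rate $e^{-\omega t}$ to some circle $\hat\rho_*\in\mathcal E$. Finally, the invariants are used. By \eqref{lol1}--\eqref{lol2} the quantities in \eqref{eqcons} are conserved along the flow, and they are continuous in $H^r$, so $\hat\rho_*$ also satisfies \eqref{eqcons}. The only circle near $1$ with area $\pi$ and centre of mass $0$ is $\rho_*=1$, hence $\hat\rho_*=1$, which is \eqref{convEx2}.

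I expect the main obstacle to be verifying the hypotheses uniformly in the right index ranges, specifically the generator property and the lower-index pair needed for \eqref{assC}, since \eqref{regP:Ex2} only asserts boundedness. I would rely on the generation results of \cite{MW2025x} for each $r\in(3/2,2]$ and keep $r''>3/2$, which is possible because $\bar r-1<3/2<\bar r$.
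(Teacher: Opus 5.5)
Your proposal is correct and follows the paper's proof essentially step by step. It uses the same pair $E_0=H^{\bar r-2}(\bT)$, $E_1=H^{\bar r+1}(\bT)$ with complex interpolation and the same parametrization of nearby circles. It also has the same multiplier $|k|(1-k^2)$ with $\omega_0=6$, the same choice $F=H^{r''-2}(\bT)$, $G=H^{r''+1}(\bT)$ with $3/2<r''<\bar r$ for \eqref{assC}, and the same use of the flow invariants to force $\hat\rho_*=1$.

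The only deviation is cosmetic. You take $E_\beta=H^{r'}(\bT)$ with $r'\in(\bar r,r)$ instead of the paper's $E_\beta=H^{\bar r}(\bT)$. In that case \eqref{as2A} must come from \eqref{regP:Ex2} with $r=\bar r$, restricted to $\mathcal V_{r'}\subset\mathcal V_{\bar r}$, because you need $A(\rho)$ defined on all of $H^{\bar r+1}(\bT)$. It does not follow from \eqref{regP:Ex2} with $r'$ in place of $r$, as you wrote.
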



\begin{proof}
The well-posedness of  \eqref{intrQPP} is established in
 \cite[Theorem 1.1]{MW2025x} relying on~\eqref{regP:Ex2} and abstract quasilinear parabolic theory (see Theorem~\ref{T1}). We omit details here.

As for the stability statement, we set $ E_0:=H^{\bar r-2}(\bT)$ and $E_1:= H^{\bar r+1}(\bT)$ and choose 
\[
0<\beta:=\frac{2}{3}<\alpha:=\frac{2+r-\bar r}{3}<\zeta:=\frac{3+r'-\bar r}{3} <1\qquad\text{with}\qquad 3/2<r'<\bar r<r<2.
\]
We recall  the interpolation property 
  \begin{align}\label{IP}
[H^{r_0}(\mathbb{\bT}),H^{r_1}(\mathbb{\bT})]_\theta=H^{(1-\theta)r_0+\theta r_1}(\mathbb{\bT}),\qquad\theta\in(0,1),\ \,  -\infty\leq r_0\leq r_1<\infty,
\end{align}
where $[\cdot,\cdot]_\theta$ refers to the complex interpolation functor of exponent $\theta\in(0,1)$.
Setting $E_\theta:=[E_0,E_1]_{\theta}$ for $\theta\in[0,1]$, we infer from~\eqref{IP} that
 \begin{equation*}
E_\zeta=H^{1+r'}(\bT)\hookrightarrow E_\alpha=H^{r}(\bT)\hookrightarrow E_\beta=H^{\bar r}(\bT),
 \end{equation*}
and \eqref{regP:Ex2} ensures that $\Phi \in {\rm C}^\infty\big(O_\beta, \mathcal{L}(E_1,\, E_0)\big)$ with $O_\beta:=\mathcal{V}_{\bar r}$.
Therefore, we have verified assumption~\eqref{assA}.

Concerning the set of equilibria close to the unit circle, we follow~\cite{ES98} and solve the equation
\[
|\rho e^{i\tau} - (a,b)|^2 = r^2
\]
for $\rho\in (0,\infty)$. This yields that, for $(a,b,r)$ in a sufficiently small neighborhood 
$U$ of~$0$ in~$\mathbb{R}^3$, the circle with radius $1+r$ and center $(a,b)$  coincides with the curve $\Gamma_\rho$  for $\rho$ given  by 
\[
\rho
= a\cos  + b\sin  
+ \sqrt{(a\cos  + b\sin )^2 + (1+r)^2 - a^2 - b^2}=:\Psi(a,b,r)\in E_1.
\]
Then $\Psi\in {\rm C}^\infty(U,E_1)$ satisfies $\Psi(0)=1$ and  
\begin{equation}\label{ggv}
\mathrm{rg}(\partial \Psi(0))
= \mathrm{span}\{1,\, \sin,\, \cos\},
\end{equation}
which shows that~\eqref{E}  holds.

To verify~\eqref{assB}, we  note from \cite[Section~5]{MW2025x}  that
\[
A_*(0)=A(1)=H\circ\Big(\frac{{\rm d}^3}{{\rm d}\tau^3}+\frac{{\rm d}}{{\rm d}\tau}\Big),
\]
where $H$ denotes the periodic Hilbert transform. 
In particular, $A_*(0)\in\kL(E_1,E_0)$ is the Fourier multiplier with symbol~$m(k):=|k|(1-|k|^2)$, $k\in\mathbb{Z}$.  That is,
\[
A_*(0)\rho=\sum_{k\in\mathbb{Z}} m(k)a_k e^{ik\tau},\qquad \rho=\sum_{k\in\mathbb{Z}} a_k e^{ik\tau}\in E_1.
\]
Hence,
\[
\sigma(A_*(0))=\{|k|(1-|k|^2)\,:\, k\in\mathbb{Z}\}
\]
 consists solely of isolated eigenvalues. 
Together with~\eqref{ggv}, this verifies~\eqref{assB}.  We also note  that  $-\omega_0=\sup\{|k|(1-|k|^2)\,:\, |k|\geq 2\}=-6$.
 
Finally,  regarding the  last assumption \eqref{assC} of Theorem~\ref{MT1}, we note that the projection~$P$ corresponding to the spectral set $\{0\}$ of $A_*(0)$ is given by
 \[
P \rho =\sum_{|k|\leq 1}  a_k e^{ik\tau},\qquad \rho=\sum_{k\in\mathbb{Z}} a_k e^{ik\tau}\in E_1.
\]
Therefore, setting $G:= E_\zeta=H^{1+r'}(\mathbb{T})$ and $F=H^{r'-2}(\mathbb{T})$, we may infer from 
\eqref{regP:Ex2} (with~$r$ replaced by~$r'$), after possibly making $O_\beta$ smaller,
 that~\eqref{assC} is fulfilled. 
 
We are thus in a position to apply Theorem~\ref{MT1} to \eqref{intrQPP} and establish the stability of the equilibrium $\rho_*=1$ and
the desired estimate~\eqref{convEx2} by using~\eqref{eqcons}.
\end{proof}


The  convergence~\eqref{convEx2}  has also been shown in \cite[Theorem~1.4]{MW2025x} by applying an abstract stability result \cite[Theorem~A.1]{MW2025x} and exploiting the flow invariants~\mbox{\eqref{lol1}–\eqref{lol2}}. However, the proof given herein is considerably shorter and less technical.
We point out that, in this example as well, assumption~\eqref{assC} alternatively follows from Proposition~\ref{PPP} in view of~\eqref{regP:Ex2}.

\subsection{Example (A Quasilinear Problem with Scaling Invariance)}\label{Ex3} 
In this example we shall apply Theorem~\ref{MT2} to the quasilinear evolution equation \cite{PSW18, QS19}
\begin{subequations}\label{E3}
\begin{equation}\label{E3a}
 \partial_tu={\rm div}(a(u)\nabla u) +|\nabla u|^{\kappa}\qquad \text{in } \Omega,\ \, t>0, 
\end{equation}
subject to homogeneous Neumann boundary conditions
\begin{equation}\label{E3b}
  \partial_\nu u=0\qquad \text{on } \partial\Omega,\ \, t>0, 
\end{equation}
and initial condition
\begin{equation}\label{E3c}
u(0)=u^0 \qquad \text{in $\Omega  $,} 
\end{equation}
\end{subequations}
where ${\kappa>3}$, $a\in {\rm C}^{3-}(\R)$ is a  strictly positive   function, and $u^0:\Omega\to\R$ is a given function on a smooth and bounded domain $\Omega\subset\R^n$  with $n\geq 1$.
 For $p\in(1,\infty)$ we set
\begin{equation*}
 H_{p,N}^{2\theta}(\Omega)
 :=\left\{\begin{array}{ll} \{u\in H_{p}^{2\theta}(\Omega) \,:\,  \partial_\nu u=0 \text{ on } 
 \partial\Omega\}, &1+\frac{1}{p}<2\theta\leq 2 ,\\[3pt]
	 H_{p}^{2\theta}(\Omega), & -1+\frac{1}{p}< 2\theta< 1+\frac{1}{p},\end{array} \right.
\end{equation*}
and note, taking $a$ to be constant and using scaling invariance arguments, that the space~$H^{s_c}_{p,N}(\Omega)$ with
\begin{equation}\label{crsp}
s_c:=\frac{n}{p}+\frac{\kappa-2}{\kappa-1}
\end{equation}
is a critical space for  \eqref{E3}, see~\cite{PSW18, MRW25}. Local well-posedness of \eqref{E3} in critical scaling invariant spaces has been addressed  in \cite[Example 2]{PSW18} and subsequently in~\cite[Example~4]{MRW25}.
Moreover, the  exponential stability  of the zero solution to \eqref{E3} with~\eqref{E3b} replaced by a homogeneous Dirichlet boundary condition (in this case is $0$ the only constant solution to \eqref{E3a})
has been established in the corresponding critical Bessel potential space of exponent~\eqref{crsp}   in~\cite[Example 3]{MSW25}.
For homogeneous Neumann boundary conditions, all constants are equilibrium solutions to~\eqref{E3}, which makes the stability analysis more intricate. 
The following theorem, which is new to the best of our knowledge, establishes the   stability of the constant solutions to \eqref{E3} in the critical space identified by \eqref{crsp}.
For simplicity, in Theorem~\ref{T:Ex3} we consider the asymptotic behavior of solutions that are initially close to $u_*=0$ in this phase space (though $0$ may be replaced by any other constant).

\begin{thm}\label{T:Ex3}
Let $\kappa>3$,  $p\in (2n,(\kappa-1)n)$  with $p \neq (n-1)(\kappa-1)$, and~$\tau\in(0,1)$ such that
\begin{equation*}
\frac{1}{2}<2\tau<1-\frac{n}{p}. 
\end{equation*}
Further set
\begin{equation*}
 0<\bar s:=2\tau+ \frac{n}{p}<s_c<s:=1+\frac{n(\kappa-1)}{p\kappa}<  2-2\tau
\end{equation*}
and
\begin{equation*}
 \mu:=\frac{1}{2(\kappa-1)}-\frac{n}{2p\kappa}\in(0,1),\qquad  \vartheta:=\frac{\kappa-2}{2(\kappa-1)}-\tau\in(0,1).
\end{equation*}
Then, given any $u^0\in H^{s_c}_{p,N}(\Omega)$, there exists a unique maximal solution $u=u(\cdot; u^0)$ to~\eqref{E3}  such that
\begin{subequations}\label{regv3}
\begin{equation}
\begin{aligned}
u&\in {\rm C}\big((0,t^+), H^{2-2\tau}_{p,N}(\Omega)\big)\cap {\rm C}^1\big((0,t^+), H^{-2\tau}_{p}(\Omega)\big)\cap {\rm C}\big([0,t^+),H^{s_c}_{p,N}(\Omega)\big)\\
&\quad\cap  {\rm C}^{\vartheta}\big([0,t^+), H^{\bar s}_{p}(\Omega)\big) 
\end{aligned}
\end{equation}
and
\begin{equation}
\lim_{t\to 0} t^{\mu}\|u(t)\|_{H^s_p} = 0,
\end{equation}
\end{subequations}
where $t^+:=t^+(u^0)\in(0,\infty] $ is the maximal existence time of the solution.

  Moreover, $u_*=0$ is a stable equilibrium in $H^{s_c}_{p,N}(\Omega)$, and  there exists $\omega_0>0$ and for each $\omega\in (0,\omega_0)$ there exist constants $\delta>0$ and  $K\geq 1$ such that for all $\|u^0\|_{H^{s_c}_{p}}\leq \delta$ the solution~${u=u(\cdot; u^0)}$ to~\eqref{E3}
    is globally defined and there is $\hat u_*\in\R$ such that 
 \begin{equation}\label{estE3}
  \|u(t;u^0)-\hat u_*\|_{H^{s_c}_{p}}+t^{\mu}\|u(t;u^0)-\hat u_*\|_{H^{s}_p}\le K\, e^{-\omega t}\,\|u^0\|_{H^{s_c}_p},\qquad t>0.
 \end{equation}
\end{thm}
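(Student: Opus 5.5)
The plan is to apply Theorem~\ref{MT2} in the Bessel potential scale with $E_0:=H^{-2\tau}_p(\Omega)$ and $E_1:=H^{2-2\tau}_{p,N}(\Omega)$, so that $E_\theta=[E_0,E_1]_\theta\doteq H^{2\theta-2\tau}_{p,N}(\Omega)$ (with the usual exclusion of the boundary exponent, which is what $p\neq(n-1)(\kappa-1)$ ensures for $s$). I will choose the exponents so that $E_\alpha=H^{s_c}_{p,N}$, $E_\xi=H^{s}_{p,N}$ and $E_\beta=H^{\bar s}_p$, that is $2\alpha-2\tau=s_c$, $2\xi-2\tau=s$, $2\beta-2\tau=\bar s$, and I will take $\gamma$ with $E_\gamma=L_p$, i.e.\ $\gamma=\tau$. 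With these choices $\xi-\alpha=\mu$ and $\alpha-\beta=\vartheta$, which matches \eqref{regv3} and \eqref{estE3}.

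\textbf{Quasilinear part.} I write $A(u)v:=\mathrm{div}(a(u)\nabla v)$ in weak form on $H^{2-2\tau}_{p,N}\to H^{-2\tau}_p$. Since $\bar s>n/p$, $H^{\bar s}_p\hookrightarrow {\rm C}^{2\tau}$, so $a(u)\in {\rm C}^{2\tau}$ depends in a ${\rm C}^1$ way on $u\in O_\beta:=E_\beta$. The standard generation results for divergence-form operators with Hölder coefficients in negative Bessel potential spaces (\cite{Am93}) then give \eqref{HypoA1}.

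\textbf{Semilinear part.} For $f(u)=|\nabla u|^\kappa$ with $\kappa>3$, the map is ${\rm C}^1$. Using $H^{s-1}_p\hookrightarrow L_{p\kappa}$, which is equivalent to $s-1\ge n(\kappa-1)/(p\kappa)$ and holds with equality by the choice of $s$, I get $\|f(u)-f(v)\|_{L_p}\le C(\|u\|_{\xi}^{\kappa-1}+\|v\|_{\xi}^{\kappa-1})\|u-v\|_\xi$. This is \eqref{Hypof2} with $q=\kappa$. I then check that $\alpha_{\rm crit}=(q\xi-1-\gamma)/(q-1)=\alpha$, which is the arithmetic producing $s_c$, and verify $\beta<\alpha<\xi$ from $\bar s<s_c<s$. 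Condition \eqref{Y} holds because all functors are complex. Theorem~\ref{T2} then gives the well-posedness part.

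\textbf{Equilibria and spectrum.} The equilibria are the constants, and $\mathcal{E}\cap$ (ball) $=\{c\}$ is the one-dimensional linear manifold $\Psi(c)=c$, giving \eqref{E}. Since $\partial f(0)=0$ because $\kappa>1$, and $\nabla u_*=0$ makes $(\partial A(0)[\cdot])u_*=0$, the linearization is $A_*(0)=a(0)\Delta_N$ on $E_0$. This operator is self-adjoint-like with compact resolvent, its kernel is the constants, $0$ is semi-simple with projection $Pu=\langle u\rangle$ (extended to $H^{-2\tau}_p$ by pairing with $1$), and the rest of the spectrum lies in $(-\infty,-a(0)\lambda_1]$. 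This gives \eqref{assB} with $\omega_0=a(0)\lambda_1$.

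\textbf{Assumption \eqref{assC} (main obstacle).} Here I would invoke Proposition~\ref{PPP} in the scale $\mathsf F_\theta$ of Bessel spaces, taking $F:=H^{-2\tau'}_p$ and $G:=H^{2-2\tau'}_{p,N}$ with $\tau'$ slightly larger than $\tau$. This requires that $A$ is also a ${\rm C}^1$ generator from $G$ to $F$ on a neighborhood in $E_\beta$, which holds for $\tau'$ close to $\tau$ since the coefficients remain ${\rm C}^{2\tau'}$ if $\bar s>n/p+2\tau'$ (reduce slightly). Alternatively, \eqref{110b} can be checked directly: $\|(A(w)-A(0))v\|_F\le C\|a(w)-a(0)\|_{{\rm C}^{2\tau'}}\|v\|_G$, the mean projection is bounded on $F$, and $P(F)=\mathbb R=E_1^0$. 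The delicate part is matching the exponent bookkeeping so that $G=E_\zeta$ with $\zeta\in(\xi,1)$, as required in the proof of Theorem~\ref{MT2}.

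Once this is done, Theorem~\ref{MT2} yields the stability of $u_*=0$ together with \eqref{estE3}, since $\hat u_*\in\mathcal E$ is a constant.
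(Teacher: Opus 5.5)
Your route is the paper's: the same scale $E_\theta=H^{2\theta-2\tau}_{p,N}(\Omega)$, the same exponents ($\gamma=\tau$, and $\beta,\alpha,\xi=\tau+\bar s/2,\ \tau+s_c/2,\ \tau+s/2$), $q=\kappa$ with $\alpha=\alpha_{\rm crit}$, Theorem~\ref{MT2}, and Proposition~\ref{PPP} for \eqref{assC}. The step that fails as written is the one you flag yourself, the verification of \eqref{assC}.

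Your direction $\tau'>\tau$ is the right one. \eqref{assC1} needs $E_\zeta\hookrightarrow G$ for some $\zeta<1$, and Proposition~\ref{PPP} needs $F=\mathsf F_{\theta_0}$ to be weaker than $E_0=\mathsf F_{\theta_1}$. So $(G,F)$ must be a weaker pair than $(E_1,E_0)$; note that the paper's phrase ``$\tau'\in(1/4,\tau)$'' does not fit this ordering. However, your justification uses the wrong coefficient threshold. You require $a(w)\in{\rm C}^{2\tau'}$ and claim this follows from $\bar s>n/p+2\tau'$. That is false, because $\bar s=n/p+2\tau<n/p+2\tau'$. For the same reason $\|a(w)-a(0)\|_{{\rm C}^{2\tau'}}$ is not controlled by $\|w\|_{H^{\bar s}_p}$, so your ``alternative'' estimate does not give \eqref{110b} either.

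What is actually needed is that the coefficient be a pointwise multiplier on $H^{1-2\tau'}_p$, i.e.\ $b\in{\rm C}^\rho$ with $\rho>1-2\tau'$. This is the analogue of the condition $\rho>1-2\tau$ under which the paper applies \cite[Theorem~8.5]{Am93} to $(E_1,E_0)$, and it yields $\|\mathrm{div}(b\nabla\,\cdot)\|_{\mathcal L(G,F)}\le C\|b\|_{{\rm C}^\rho}$. Since $2\tau>\tfrac12$, you have $1-2\tau'<1-2\tau<2\tau$. Hence any $\rho\in(1-2\tau',2\tau)$ satisfies $H^{\bar s}_p\hookrightarrow{\rm C}^\rho$, and you get $\|A(w)-A(0)\|_{\mathcal L(G,F)}\le C\|a(w)-a(0)\|_{{\rm C}^\rho}\le L\|w\|_{E_\beta}$ near $0$, together with $A\in{\rm C}^1(O_\beta,\mathcal H(G,F))$.

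Proposition~\ref{PPP} then applies with $\mathsf F_{\theta_2}=E_\beta$, $\mathsf F_{\theta_3}=E_\xi$ (where $f$ is ${\rm C}^1$, as in the proof of Theorem~\ref{MT2}), $F=H^{-2\tau'}_p$ and $G=H^{2-2\tau'}_{p,N}$. You only need $\tau'>\tau$ close enough to $\tau$ that $s<2-2\tau'$ and $2\tau'<1-1/p$; the latter keeps $F$ in the range of \eqref{f2N}. This is exactly where the hypothesis $2\tau>1/2$ enters, and your proposal never uses it. It is already needed for $A(u)\in\mathcal H(E_1,E_0)$ with ${\rm C}^{2\tau}$ coefficients, where the requirement is $2\tau>1-2\tau$, not merely ``H\"older coefficients''.

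The remaining bookkeeping you worried about is harmless. $G=E_{1-(\tau'-\tau)}$, and since $\zeta$ in \eqref{assC} may always be enlarged, it can be pushed as close to $1$ as the proof of Theorem~\ref{MT2} requires. A minor slip: $p\neq(n-1)(\kappa-1)$ is what guarantees $s_c\neq1+1/p$; $s$ and $\bar s$ avoid $1+1/p$ automatically.
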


Before providing the proof of Theorem~\ref{T:Ex3} we remark the following:

\begin{rem}\label{R:Ex3} 
The integral mean of any solution to~\eqref{E3} is a strictly increasing function of time and its limit is the asymptotic limit $\hat u_*$ from~\eqref{estE3}.
\end{rem}

\begin{proof}[Proof of Theorem~\ref{T:Ex3}] We proceed similarly as in \cite{MRW25} and set
\[
 F_0:=L_p(\Omega),  \qquad  F_1:= W_{p,N}^{2}(\Omega)=H_{p,N}^{2}(\Omega),
 \] 
 recalling from  \cite[\S 4]{Am93} that 
\[
B_0:=\Delta_N:=\Delta|_{W_{p,N}^{2}(\Omega)}\in \mathcal{H}\big(W_{p,N}^{2}(\Omega),L_p(\Omega)\big).
\]
Let
$$
\big\{(F_\theta,B_\theta)\,:\, -1\le \theta<\infty\big\}
$$
be the interpolation-extrapolation scale generated by $(F_0,B_0)$ and the 
complex interpolation functor~$[\cdot,\cdot]_\theta$ (see \cite[\S 6]{Am93} and \cite[\S V.1]{LQPP}), that is,
\begin{equation*}
B_\theta\in \mathcal{H}(F_{1+\theta},F_\theta),\qquad -1\le \theta<\infty,
\end{equation*}
with (see \cite[Theorem~7.1; Equation (7.5)]{Am93})
\begin{equation}\label{f2N}
 F_\theta\doteq H_{p,N}^{2\theta}(\Omega),\qquad   2\theta\in\Big(-1+\frac{1}{p},2\Big]\setminus\Big\{1+\frac{1}{p}\Big\}.
\end{equation}
Moreover, since
$\Delta_N-1$ has bounded imaginary powers (see~\cite[III.~Examples 4.7.3~(d)]{LQPP}), we infer from \cite[Remarks~6.1~(d)]{Am93} that
\begin{equation}\label{f3N}
 [F_\beta,F_\alpha]_\theta\doteq F_{(1-\theta)\beta+\theta\alpha},\qquad -1\leq \beta<\alpha,\ \theta\in(0,1).
\end{equation}
Define now
\[
E_\theta:=   H^{2\theta-2\tau}_{p,N}(\Omega),\qquad 2\tau+1+\frac{1}{p}\neq 2\theta\in[0,2].
\]
We further set $q := \kappa > 3$ and 
\begin{equation}\label{bnnn1}
0 < \gamma := \tau < \beta := \tau + \frac{\bar s}{2}< \alpha := \tau + \frac{s_c}{2}< \xi := \tau + \frac{s}{2} < 1.
\end{equation}
Then
\[
\vartheta = \alpha - \beta, \qquad \mu = \xi - \alpha,
 \qquad\alpha_{\rm crit}= \frac{\kappa \xi - 1 - \gamma}{\kappa - 1}= \alpha\in (\beta, \xi),
\]
and, noticing that none of the constants $\bar s$, $s$, or $s_c$ fixed in Theorem~\ref{T:Ex3}  is equal to~$1+1/p$, we have
\[
E_\xi = H^{s}_{p,N}(\Omega)\hookrightarrow E_\alpha = H^{s_c}_{p,N}(\Omega)
\hookrightarrow E_\beta = H^{\bar s}_{p,N}(\Omega)\hookrightarrow E_\gamma= L_{p}(\Omega).
\]

The well-posedness of \eqref{E3} has been obtain in \cite[Example 3]{MRW25}  by noticing that  \eqref{E3} can be reformulated as a   quasilinear  evolution problem
\begin{equation}\label{QEE2}
u'=A(u)u+f(u),\quad t>0,\qquad u(0)=u^0,
\end{equation} 
where $A: H^{\bar s}_{p,N}(\Omega)\to\kL(H^{2-2\tau}_{p,N}(\Omega),H^{-2\tau}_{p,N}(\Omega))$ is defined by 
\[
A(u)v:={\rm div} (a(u)\nabla v),\qquad  u\in H^{\bar s}_{p,N}(\Omega),\ v\in H^{2-2\tau}_{p,N}(\Omega),
\]
and $f: H^{ s}_{p,N}(\Omega)\to L_p(\Omega)$  is defined by
\[
f(u):=|\nabla u|^{\kappa},\qquad u\in H^{ s}_{p,N}(\Omega).
\] 

With respect to the semilinearity $f$, we observe that $H^{s-1}_{p,N}(\Omega)\hookrightarrow L_{p\kappa}(\Omega)$
and that the Nemytskii operator $w \mapsto |w|^\kappa$
belongs to~${\rm C}^1\big(L_{p\kappa}(\Omega),L_{p}(\Omega)\big)$,
which follows from standard arguments based on the estimate
\[
\big|\,x|x|^\alpha - y|y|^\alpha\big|\le (1+\alpha)\big(|x|^\alpha + |y|^\alpha\big)|x-y|,\qquad x,y\in\mathbb{R},\ \alpha>-1.
\]
Combining these properties, we readily conclude that
\begin{equation*}\label{3regf}
f \in {\rm C}^1\big(H^{s}_{p,N}(\Omega),L_p(\Omega)\big).
\end{equation*}
Moreover, there exists a constant $C>0$ such that
\begin{align*}
\|f(u)-f(v)\|_{L_p}\leq C (\|u\|_{H^{s}_{p}}^{\kappa-1}+\|v\|_{H^{s}_{p}})^{\kappa-1}\|u-v\|_{H^{s}_{p}},\qquad v, w\in H^{s}_{p,N}(\Omega).
\end{align*}
Concerning the quasilinear term $u \mapsto A(u)$, we observe that since 
$a \in {\rm C}^{3-}(\mathbb{R})$, standard arguments similar to those in 
\cite[Lemma 4.1]{MW_PRSE} ensure that the Nemytskii operator 
$u \mapsto a(u)$ belongs to ${\rm C}^1\big(L_\infty(\Omega)\cap W^r_p(\Omega),\, L_\infty(\Omega)\cap W^r_p(\Omega)\big)$ for each $r\in(0,1)$.
Let $2\varepsilon \in (0,4\tau-1)$ be fixed and recall from 
\cite[Equations (5.2)–(5.6)]{Am93} the embeddings
\[
H^{\bar s}_{p}(\Omega)\hookrightarrow  W^{\bar s-\varepsilon}_{p}(\Omega)\hookrightarrow  H^{\bar s-2\varepsilon}_{p}(\Omega)\hookrightarrow  H^{1-2\tau}_{p}(\Omega),
\]
all these spaces being algebras with respect to pointwise multiplication since $1-2\tau>n/p$.
It now follows that the Nemytskii operator $u \mapsto a(u)$ also belongs to~${\rm C}^1\big(H^{\bar s}_p(\Omega),\, H^{\bar s-2\varepsilon}_p(\Omega)\big)$ and
\begin{equation*}
A \in {\rm C}^1\big(H^{\bar s}_{p,N}(\Omega),
\mathcal{L}\big(H^{2-2\tau}_{p,N}(\Omega),H^{-2\tau}_{p,N}(\Omega)\big)\big).
\end{equation*}
Moreover, noticing for $u\in H^{\bar s}_{p,N}(\Omega)$ that the function $a(u)$  lies in~$H^{\bar s-2\ve}_{p,N}(\Omega)\hookrightarrow {\rm C}^\rho(\overline{\Omega})$ with~$\rho=2(\tau-\ve)>1-2\tau$, and is also strictly positive, we infer from \cite[Theorem 8.5]{Am93} that $A(u)\in \mathcal{H}\big(H^{2-2\tau}_{p,N}(\Omega),H^{-2\tau}_{p,N}(\Omega)\big)$. 
Consequently,
\begin{equation}\label{3rega}
A\in{\rm C^{1}}\big(H^{\bar s}_{p,N}(\Omega),\mathcal{H}(H^{2-2\tau}_{p,N}(\Omega),H^{-2\tau}_{p,N}(\Omega))\big).
\end{equation}
Therefore, we checked~\eqref{Hypo} with $q=\kappa$ and~\eqref{Y} as well. 

Next, it readily follows from~\eqref{E3a}
 and~\eqref{E3b} that the equilibria are the constants, i.e. $\mathcal{E}=\R$, hence~\eqref{E} holds.
In order to verify~\eqref{assB} we observe that 0 is a simple eigenvalue of $$A_*(0)=A(0)=a(0)\Delta_N\in \mathcal{L}\big(W_{p,N}^2(\Omega),L_p(\Omega))\big)$$ and a similar argument as in the proof of Proposition~\ref{PPP} implies that 0 is a simple eigenvalue of $A_*(0)\in \mathcal{L}\big(H_{p,N}^{2-2\tau}(\Omega),H_{p,N}^{-2\tau}(\Omega)\big)$. Condition~\eqref{assC} then follows from Proposition~\ref{PPP} and~\eqref{3rega} with $\tau$ replaced by some $\tau'\in (1/4,\tau)$.

Consequently, we may invoke Theorem~\ref{MT2} to derive Theorem~\ref{T:Ex3}.
\end{proof}

\bibliographystyle{siam}
\bibliography{Literature}

\end{document}